\theoremstyle{plain}
\newtheorem*{theorem*}{Theorem}
\newtheorem{theorem}{Theorem}[section]
\newtheorem{proposition}[theorem]{Proposition}
\newtheorem*{proposition*}{Proposition}
\newtheorem{corollary}[theorem]{Corollary}
\newtheorem*{corollary*}{Corollary}
\newtheorem{lemma}[theorem]{Lemma}
\newtheorem*{lemma*}{Lemma}
\theoremstyle{definition}
\newtheorem{remarks}[theorem]{Remarks}
\newtheorem*{remark*}{Remark}
\newtheorem*{ack*}{Acknowledgements}
\newtheorem*{questions*}{Questions}
\newtheorem*{question*}{Question}
\newtheorem{question}{Question}
\theoremstyle{definition}
\newtheorem{definition}[theorem]{Definition}
\newtheorem*{definition*}{Definition}
\newcommand{\nc}{\newcommand}
\newcommand{\C}{{\mathbb C}}
\newcommand{\N}{{\mathbb N}}
\newcommand{\DD}{{\mathbb D}}
\newcommand{\R}{{\mathbb R}}
\newcommand{\T}{{\mathbb{T}}}
\newcommand{\mH}{{\mathcal H}}
\newcommand{\Int}{\operatorname{Int}}
\newcommand{\Alg}{\operatorname{Alg}}
\newcommand{\alg}{\operatorname{alg}}
\newcommand{\Hol}{\operatorname{Hol}}
\newcommand{\mult}{\operatorname{Mult}}
\nc{\Intl}{\Int_{l^{\infty}}}
\newcommand{\lra}{\longrightarrow}
\newcommand{\lmto}{\longmapsto}
\newcommand{\eps}{\varepsilon}
\newcommand{\vp}{\varphi}
\nc{\bea}{\begin{eqnarray}}
\nc{\eea}{\end{eqnarray}}
\nc{\beqa}{\begin{eqnarray*}}
\nc{\eeqa}{\end{eqnarray*}}
\nc{\Hi}{H^{\infty}}
\nc{\loi}{\ell^{\infty}}
\nc{\NL}{N^+\vert \Lambda}
\nc{\liL}{\lambda\in\Lambda}
\nc{\nn}{\nonumber}
\nc{\hf}{{\mathcal H}_{\Phi}}
\nc{\hF}{{\mathcal H}_{\Phi}}
\newenvironment{proof*}{\vskip 2mm\noindent {}}{$\blacksquare$ \vskip 2mm}
\numberwithin{equation}{section}
\nc{\card}{\operatorname{card}}
\nc{\tsn}{\tilde{\sigma}_n}
\nc{\tsk}{\tilde{\sigma}_k}
\nc{\tskp}{\tilde{\sigma}_k^+}
\nc{\tskm}{\tilde{\sigma}_k^-}
\nc{\D}{\displaystyle}
\nc{\vt}{\vartheta}
\title[Pointwise multipliers in Hardy-Orlicz
spaces, and interpolation]{Pointwise multipliers in Hardy-Orlicz spaces, 
and interpolation}
\author{Andreas Hartmann}
\address{Equipe d'Analyse \& G\'eom\'etrie,
Institut de Math\'ematiques de Bordeaux,
Universit\'e Bordeaux I, 351 cours de la Lib\'eration,
33405 Talence, France}
\email{Andreas.Hartmann@math.u-bordeaux1.fr}
\date{\today}
\keywords{Multipliers; Hardy-Orlicz spaces;
free interpolation; rearrangement invariant spaces}
\subjclass{30H05, 46E30, 30E05}
\begin{document}

\begin{abstract}
We study multipliers of Hardy-Orlicz spaces $\mH_{\Phi}$
which are strictly contained between $\bigcup_{p>0}H^p$ and 
so-called ``big'' Hardy-Orlicz spaces. 
Big Hardy-Orlicz spaces, carrying an algebraic structure,
are equal to their multiplier algebra,
whereas in classical Hardy spaces $H^p$, the multipliers reduce to
$H^{\infty}$. For Hardy-Orlicz spaces $\mH_{\Phi}$ 
between these two extremal situations and
subject to some conditions, we exhibit 
multipliers that are in Hardy-Orlicz spaces
the defining functions of which are related to 
$\Phi$. 
Even if the results do not entirely characterize
the multiplier algebra, some examples show that 
we are not very far from precise conditions.
In certain situations we see how 
the multiplier algebra grows in a sense from $\Hi$ to big Hardy-Orlicz
spaces when we go from classical $H^p$ spaces to big Hardy-Orlicz spaces.
However, the multiplier algebras are not always ordered as their
underlying Hardy-Orlicz spaces. Such an ordering holds 
in certain situations, but examples show that there are large Hardy-Orlicz
spaces for which the multipliers reduce to $\Hi$ so that the
multipliers do in general not conserve the ordering of the
underlying Hardy-Orlicz spaces.
We apply some of the multiplier results to construct 
Hardy-Orlicz spaces close to $\bigcup_{p>0}H^p$ and for which the
free interpolating sequences are no longer characterized by the
Carleson condition which is well known to characterize free interpolating 
sequences in $H^p$, $p>0$. 
\end{abstract}

\maketitle

\section{Introduction}

Let $\DD=\{z\in\C:|z|<1\}$ be the unit disk of the complex plane.
For a space of holomorphic functions on $\DD$, 
$X\subset\Hol(\DD)$, we define the multiplier algebra 
of $X$ by
\beqa
 \mult(X):=\{g\in\Hol(\DD):\forall f\in X, gf\in X\}.
\eeqa
We will consider spaces $X$ containing the constants so that automatically
$\mult(X)\subset X$.
Multiplier algebras have been studied in different settings.
They appear for instance in the context of cyclic functions
(see e.g.\ \cite{BS}).
Here we will rather be interested in
interpolation problems where multipliers come into
play for example via the Nevanlinna-Pick property
(see e.g.\ \cite{Ko}, \cite{MS}, \cite{Se}).
In this paper we will not consider the Nevanlinna-Pick
property but focus on spaces
for which the multiplier algebra is big in the sense
that its trace on $\Hi$-interpolating sequences 
contains more than only bounded sequences. 
(Recall that $\Hi$ is the space of bounded holomorphic functions
on $\DD$.) In such a situation it is possible
to interpolate bounded sequences on suitable non separated unions of
$\Hi$-interpolating
sequences. This was done in \cite{DSh} for Hardy spaces,
and a more general result can be derived from \cite{Ha} in
so-called (C)-stable spaces. Note (and this will be clear from
the definitions below) that if we can interpolate bounded sequences
by functions in the multiplier algebra then we can interpolate
freely in the initial space.

The spaces we are interested in here are included in the Smirnov class $N^+$.
Recall that the Nevanlinna class on $\DD$ is defined by
\beqa
 N=\{f\in\Hol(\DD):\sup_{0<r<1}\frac{1}{2\pi}\int_{\T}
 \log_+|f(re^{it})|dt<\infty\}.
\eeqa
Here $a_+=\max(0,a)$ for a real number $a$. 
It is well known that functions in the Nevanlinna class admit
non-tangential boundary values almost everywhere
on $\T=\partial\DD$. Then
\beqa
 N^+=\{f\in N:\sup_{0<r<1}\frac{1}{2\pi}\int_{\T}
 \log_+|f(re^{it})|dt=\frac{1}{2\pi}\int_{\T}
 \log_+|f(e^{it})|dt\}.
\eeqa
Hardy-Orlicz classes can then be defined by logarithmic convex functions
$\Phi=\vp\circ\log$ where $\vp$ is a positif, increasing, convex
function with $\vp(t)/t\to\infty$:
\beqa
 \mH_{\Phi}=\{f\in N^+:\int_{\T}\Phi(|f|)dm<\infty\}
\eeqa
(for more precise definitions, see Section \ref{S2}).
In the special situation when $\vp(t)=e^{pt}$ we obtain the
usual Hardy spaces, and when $\vp(t)=t^p$ we obtain so-called
big Hardy-Orlicz spaces. It is clear that in the first case
$\mult(H^p)=\Hi$ and in the second case $\mult(\mH_{\Phi})=\mH_{\Phi}$
since $\vp(t)=t^p$ satisfies a quasi-triangular inequality so
that $\mH_{\Phi}$ is an algebra and hence equal to its multiplier
algebra (see also \cite[Theorem 3.2]{HK}). 
A natural question arising from this observation
is to understand 
how the multiplier algebra changes from $\Hi$ for Hardy spaces $H^p$
(in a sense small Hardy-Orlicz spaces) to $\mH_{\Phi}$ for
big Hardy-Orlicz spaces.

Under certain conditions on the defining function $\vp$ of
the Hardy-Orlicz space under consideration $\mH_{\Phi}$ we will find
so-called admissible functions 
allowing the construction of new
Hardy-Orlicz spaces that are included (as well as the
algebras they generate) in the multipliers
of $\mH_{\Phi}$ ({\bf Theorem \ref{thm1}}), 
or that contain the multipliers of $\mH_{\Phi}$ 
({\bf Theorem \ref{thm3.2}}).
{\bf Corollary \ref{cor3.5}} shows that for certain scales
of Hardy-Orlicz spaces the gap between both inclusions is small.
{\bf Proposition \ref{propn3.4}} shows that Theorem \ref{thm1}
is optimal in a sense, and {\bf Proposition \ref{thm4.2}}
exhibits a function $g$ contained in
the space found in Theorem \ref{thm3.2} as an upper bound 
of the multiplier algebra of 
$\mH_{\Phi_{1/2}}$ (here $\Phi_{1/2}(t)=e^{\sqrt{t}}$) 
and not multiplying
on $\mH_{\Phi_{1/2}}$, 
thereby showing
that Theorem \ref{thm3.2} is not optimal. 

We will also discuss the ordering of the multiplier algebras.
Under some technical condition we prove in {\bf Proposition \ref{propn3.4}}
that the multiplier algebras conserve the ordering of their
underlying Hardy-Orlicz spaces. However, {\bf Theorem \ref{thm4.3}}
shows that this is not the general situation. Surprisingly
it turns out that
there are very big Hardy-Orlicz spaces for which the multipliers
reduce to $\Hi$. In particular there exist Hardy-Orlicz spaces
for which the ordering of the multipliers is in the opposite
direction with respect of the ordering of the initial Hardy-Orlicz
spaces.

Let us mention that 
multipliers of Hardy-Orlicz spaces have previously been considered
by Hasumi and Kataoka \cite{HK}, where conditions for $\Hi$
to contain or to be contained in the multiplier algebra are given,
and also by Deeb \cite{De85}. 
In \cite{HK} the authors also give some orderings of Hardy-Orlicz 
spaces that turn out to be useful in our situation.

The question of multipliers is strongly related in particular to {\it free}
interpolating sequences. Indeed, if we can interpolate
bounded sequences on a given sequence $\Lambda=\{\lambda_n\}_n\subset \DD$ 
by functions in the multiplier algebra, then $\Lambda$
is a free interpolating sequence (for this and the following
comments, precise definitions and results can be found
in Section \ref{S4}). Let us recall some facts on
interpolating sequences. 
It is well known that the Carleson condition 
$\inf_{\lambda} |B_{\Lambda\setminus\{\lambda\}}(\lambda)|>0$
characterizes free
interpolating sequences for $H^p$, $p\in (0,\infty]$, 
and Hardy-Orlicz spaces included in the scale of $H^p$ spaces
(see \cite{carl}, \cite{ShHSh}, \cite{Kab}, \cite{Har}).
We have already mentioned that in this situation $\mult(H^p)=\Hi$.
On the other hand, in $N$, $N^+$, and in big Hardy-Orlicz
spaces (e.g.\ $\vp(t)=t^p$), which are actually
algebras (and so equal to their multipliers), free interpolating sequences
are characterized by the existence of a harmonic majorant
of $\log(1/|B_{\Lambda\setminus\{\lambda\}}(\lambda)|)$
(see \cite{HMNT}, \cite{H06}). 
This condition is much weaker than the Carleson condition
(which can be restated as saying that 
$\log(1/|B_{\Lambda\setminus\{\lambda\}}(\lambda)|)$
admits in parti\-cular constants as harmonic majorants). For instance
separated sequences (with some conditions if we are in
big Hardy-Orlicz spaces) are interpolating in these classes.

Our starting point was to know whether there exist 
Hardy-Orlicz spaces beyond $\bigcup_{p>0}H^p$ for which the
Carleson condition still characterizes the interpolating sequences,
which leads us to the following question.

\begin{question}\label{q1}
Let $\mH_{\Phi}$ be a Hardy-Orlicz space. If the interpolating 
sequences of $\mH_{\Phi}$ are characterized by the Carleson
condition, is it true that $\mH_{\Phi}$ is included in the scale
$\bigcup_{p>0}H^p$?
\end{question}

In the light of this question, a first step is to construct examples of 
Hardy-Orlicz spaces above $\bigcup_{p>0}H^p$ which are
very close to the latter union and which have interpolating
sequences that are not Carleson.
The key to such a construction is the multiplier
algebra of the Hardy-Orlicz space under consideration
when this multiplier algebra is strictly bigger than $\Hi$.  
{\bf Corollary \ref{cor5.1}} exhibits multipliers of $\mH_{\Phi}$
where e.g.\ $\Phi(t)=t^{1/\log t}$ is in a sense very close to the
defining functions $t\lmto t^p$ of $H^p$, $p>0$.
In such a situation it is possible to use ideas of Douglas and
Shapiro \cite{DSh}
to interpolate bounded sequences on suitable
non separated unions of Carleson sequences. This yields
{\bf Corollary \ref{cor6.9}} which claims the existence 
of a non Carleson sequence 
which is  free interpolating for $\mH_{\Phi}$ when
$\mult(\mH_{\Phi})$ contains a Hardy-Orlicz space $\mH_{\Psi}$
that is strictly bigger than $\Hi$.

Since there exist large Hardy-Orlicz spaces for which the multipliers
reduce to $\Hi$ (see Theorem \ref{thm4.3}), we can give a more precise
version of Question \ref{q1}.

\begin{question}\label{q2}
If the multiplier algebra of a Hardy-Orlicz space containing strictly
$\bigcup_{p>0}H^p$ is equal to $\Hi$, does it have interpolating sequences
that are not Carleson?
\end{question}

The paper is organized as follows. In Section \ref{S2},
we will introduce the necessary material on Orlicz and 
Hardy-Orlicz spaces as well as some facts on decreasing
rearrangements.
The main results on multipliers are presented in Section \ref{S3}.
More precisely we exhibit Hardy-Orlicz spaces that bound
below and above the multiplier algebra of a given Hardy-Orlicz
space.
Orderings of multipliers will be discussed in Section \ref{sectord}.
Under some technical condition we will prove that the multiplier 
algebra inherits the ordering of the underlying Hardy-Orlicz spaces.
However we will prove that there are large Hardy-Orlicz spaces for
which the multipliers reduce to $\Hi$.
An important example is discussed in 
Section \ref{sectoptim} to show 
how far we are from
a characterization of the multiplier algebra. Other examples
of Hardy-Orlicz spaces coming very close to $\bigcup_{p>0}H^p$
and having unbounded multipliers
will be treated in Subsection \ref{sectexam}. These examples
are important 
in Section \ref{S4} where we
apply the multiplier 
results to the interpolation problem. 
Using ideas in the spirit of \cite{DSh} we will construct Hardy-Orlicz spaces 
$\mH_{\Phi}$
containing strictly $\bigcup_{p>0}H^p$ but being very close
to this union, and for which there exist non separated unions of
Carleson sequences which are interpolating for $\mH_{\Phi}$.

Finally a word concerning notation. For two expressions $u,v$
depending on the same discrete a continuous variable we will
sometimes write $u<<v$ if $u=o(v)$. As usual, $u\sim v$ means
that $u=v(1+\eps)$ (or $v=u(1+\eps)$) where $\eps=o(1)$. 

\begin{ack*}
Part of this work was presented at a joint
PICASSO-GDR AFHA meeting in Marseille. I would like to thank 
the participants of that meeting, in particular A.~Borichev 
and P.~Thomas, for some interesting questions that
are maybe answered in this paper.
\end{ack*}

\section{Orlicz and Hardy-Orlicz spaces}\label{S2}

When discussing Hardy-Orlicz spaces which are strictly
bigger than $\bigcup_{p>0}H^p$, one can consider
logarithmic convex defining functions. 
This is very natural since
convex functions conserve the subharmonicity of $\log|f|$
which makes it possible to define
Hardy-Orlicz spaces via the existence of harmonic majorants
(see \cite{RR}). For this reason we will consider
in all what follows defining
functions of the form $\vp\circ\log$ where 
$\vp:{\R}\lra [0,\infty)$ is a convex, nondecreasing
function with
$\lim_{t\to \infty} \vp (t)/t =\infty$.
To fix the ideas we should set $\vp(-\infty)=0$.
According to the terminology in \cite{Ru}
such a function is called {\it strongly convex}.

With such a function we will associate the Orlicz class 
on $\T$ defined by
\beqa 
 L_{\vp\circ\log}=L_{\vp\circ\log}(\T)
 =\{f \text{ measurable on }\T:\int_{\T} \vp(\log|f|)<\infty\}.
\eeqa
In order to simplify the notation, we will also write
\beqa
 \Phi=\vp\circ\log,
\eeqa
and so
\beqa
 L_{\Phi}=L_{\vp\circ\log}.
\eeqa
The functions $\vp$ or $\Phi$ are both called defining function
for the Orlicz class (hopefully no confusion will arise
in this paper). 

It should be noted that the Orlicz class is in general
not a vector space (see for instance the example 2 in
\cite[p.52]{RR} for the case of Hardy-Orlicz classes), 
and one can define two other spaces.
According to the notation in \cite{les}
we will call
\beqa
 L^{*}_{\Phi}:=\{f\text{ measurable on }\T:\exists a>0,
 \int_{\T}\Phi\left(\frac{|f|}{a}\right) <\infty\}
\eeqa
the Orlicz space, and
\beqa
 L^{\circ}_{\Phi}:=\{f\text{ measurable on }\T:\forall a>0,
 \int_{\T}\Phi\left(\frac{|f|}{a}\right) <\infty\}
\eeqa
the space of finite elements of $L^{*}_{\Phi}$. In \cite{LLQ}, the
latter space was called the Morse-Transue space.
Note that $L_{\Phi}^{\circ}\subset L_{\Phi}\subset L_{\Phi}^*$,
and in general these three classes are different.

In order to ensure that $L_{\Phi}$ is already a vector
space,
one sometimes adds another condition to that of
a defining function of an Orlicz space:
the function $\vp$ satisfies the $\tilde{\Delta}_2$-condition
if $\vp (t+2) \le M \vp (t) +K$,
$t \ge t_0$ for some constants $M,K \ge 0$ and $t_0\in \R$.
This condition is formulated in such a way that $\Phi$ 
satisfies the
usual $\Delta_2$-condition: there exist constants $M',K'\ge 0$ and
$s_0$ such that for all $s\ge s_0$ we have
\bea\label{Delta}
 \Phi(2s)\le M'\Phi(s)+K'.
\eea
If $\vp$ satisfies the $\tilde{\Delta}_2$-condition (or
$\Phi$ satisfies the $\Delta_2$-condition), then 
$L_{\Phi}=L_{\Phi}^{\circ}=L_{\Phi}^*$.

On $L^*_{\Phi}$ we can introduce the following functional. For 
$f\in L^*_{\Phi}$, let
\beqa
 \|f\|_{\Phi}:=\inf\{t>0:\int_{\T}\Phi\Big(\frac{|f|}{t}\Big) dm\le 1\}.
\eeqa
If $\Phi$ is convex, then 
$L^*_{\Phi}$ equipped with $\|\cdot\|_{\Phi}$
is a Banach space (also if we replace $\T$ by other measure
spaces), see \cite[p.120]{LT}.
The expression $\mathcal{J}_{\Phi}(f):=\int_{\T}\Phi(|f|)dt$ is
sometimes called a modular. It does of course 
not define a norm in general.

Here are some 
facts on orderings of Orlicz spaces. Let $\vp_1$ and $\vp_2$
be two strongly convex functions and set $\Phi_i=\vp\circ\log$,
$i=1,2$.
Then $\limsup_{t\to\infty}\Phi_1(t)/\Phi_2(t)
<\infty$ 
if and only if $L_{\Phi_2}\subset L_{\Phi_1}$ (see \cite[Theorem
1.3]{HK} where this result is proved for Hardy-Orlicz spaces,
but the argument works for Orlicz spaces).
The relation $L^*_{\Phi_1}\subset L^*_{\Phi_2}$
follows from $\lim_{t\to\infty}\Phi_1(t)/\Phi_2(kt)
=\infty$ for every $k>0$
(see \cite[Theorem 13.1]{KR} in case $\Phi_l$, $l=1,2$, convex).
Also, if two functions $\Phi_1$ and $\Phi_2$ (or $\vp_1$
and $\vp_2$) are comparable, i.e.\ there are constants $C_1$, $C_2$ 
with $C_1\vp_1(t)\le\vp_2(t)\le C_2\vp_1(t)$ for big $t$, 
then the corresponding (Hardy-)Orlicz spaces are 
equal. This allows for instance to replace the defining functions
by smooth ones. In all what follows we will thus suppose that 
the defining functions are sufficiently smooth.

It should be noted that it is possible to construct strongly convex
functions $\vp_1$ and $\vp_2$ for which $\limsup \vp_1(t)/\vp_2(t)
=+\infty$ and $\liminf \vp_1(t)/\vp_2(t)=0$. 
In such a situation, by the above cited result, 
no one of the considered Orlicz
spaces can be included in the other one.

As in the classical case of $L^p$-spaces, one can associate with 
$L_{\Phi}$ a subclass of boundary limits of a space
of holomorphic functions on the disk. Recall that $N^+$ is the
Smirnov class. 
The \emph{Hardy-Orlicz class} is defined as
\beqa
      \mH_{\Phi}=\mathcal{H}_{\vp\circ\log} 
 =\{f\in N^+:\int_\T\vp (\log |f(\zeta)|)\,d\sigma(\zeta)
      <\infty\}=N^+\cap L_{\Phi},
\eeqa
where $f(\zeta)$ is the non-tangential boundary value of $f$ at
$\zeta\in {\T}$, which exists almost everywhere since $f\in N^+$.
By \cite[Theorem 4.18]{RR} this definition is equivalent to
the definition via the existence of harmonic majorants that we
mentioned in the introduction to this section.
Also, since $\mH_{\Phi}$ as well as its 
multiplier algebra  
are contained in the Smirnov class $N^+$, 
we have
a factorization. 
Recall that each $f\in N^+$ can be
written as $f=IF$, where $I$ is an inner function and $F$ is 
outer in $N^+$. More precisely
\bea\label{outer}
 F(z):=[f](z)
 :=\exp\left(\int_{\T}\frac{\zeta+z}{\zeta-z}\log|f(\zeta)|dm(\zeta)
 \right),\quad z\in\DD,
\eea
and $\log|f|\in L^1(\T)$. If $f\in \mH_{\Phi}$
then $F\in \mH_{\Phi}$ and moreover $|f|\in L_{\Phi}(\T)$.

The classical examples are the following. When $\vp(t)=e^{pt}$ for
some $p>0$, then $\mathcal{H}_{\Phi}$ is simply the Hardy space
$H^p$, in which case $\mult(\mathcal{H}_{\Phi})$ is just
the algebra $H^{\infty}$ of bounded holomorphic functions on $\DD$.

The situation which has been considered in \cite{H06} in
connection with free interpolation is when
$\vp$ satisfies a quasi-triangular inequality:
\bea\label{deltaineq}
 \vp (a+b)\le c(\vp(a)+\vp(b)),
\eea
for some constant $c$, and all reals $a,b\ge t_0$, $t_0$ also fixed.
A simple example is $\vp(t)=t^p$.
The condition \eqref{deltaineq} 
is of course related to the $\Delta_2$ condition 
for $\vp$. In this situation, $\mH_{\Phi}$ is an algebra and
its multiplier algebra is of course the algebra itself:
$\mult(\mathcal{H}_{\Phi})=\mathcal{H}_{\Phi}$.

Analogously to the above definitions, 
we will write $\mathcal{H}_{\Phi}^*$ for the
Hardy-Orlicz space, $\mathcal{H}_{\Phi}^{\circ}$ for the
Hardy-Orlicz space of finite elements (or the Hardy-Morse-Transue
space). 
Again, if $\Phi$ satisfies the $\Delta_2$ condition than
all spaces are identical $\mH_{\Phi}=\mH_{\Phi}^*=\mH_{\Phi}^{\circ}$ 
and we simply write $\mathcal{H}_{\Phi}$.

We will introduce 
some conditions for a strongly
convex function $\vp$. 
Since we will consider multipliers, we are interested in 
the integrability of $\vp(\log|f|+\log|g|)$. Hence we would
like to know if we can add some growth to the argument $t$ of
$\vp$ without changing too much the growth of $\vp$. 
Here is a precise definition.

\begin{definition}\label{def2.1}
A convex, strictly increasing function $\vp:\R\lra \R^+$
with $\lim_{t\to\infty}\vp(t)/t=+\infty$ is said to satisfy
the {\it $\tilde{\Delta}$-condition}
if there is a $c>1$, $t_0 \in \R$ and a strictly increasing
concave function $\gamma: \R^+\lra \R^+$ with $\lim_{t\to \infty} 
\gamma(t)=\infty$ such that
for all $t\ge t_{\gamma}$
\bea\label{strDel}
 \frac{\vp(t+\gamma(t))}{\vp(t)}\le c.
\eea
A function $\gamma$ will be called {\it $\tilde{\Delta}$-admissible} if
\eqref{strDel} holds for suitable $c$ and $t_0$.
\end{definition}

The requirement of $\gamma$ being concave is not restrictive
since if an increasing function $\gamma$ satisfying \eqref{strDel}
exists, then we can replace it 
by a concave one.

This condition is stronger than the $\tilde{\Delta}_2$-condition
since instead of adding $2$ in the argument of $\vp$ we add a 
function that can tend to infinity. 
If $\vp$ itself already satisfies the
standard $\Delta_2$-condition \eqref{Delta}
(which leads us to big Hardy-Orlicz spaces), then
we can choose $\gamma(t)=t$ so that $\vp$ then
satisfies the $\tilde{\Delta}$-condition.

Our model case is
\beqa
 \vp_{\alpha}(t)=e^{t^{\alpha}}, \quad t\ge t_0>0,
\eeqa
where $\alpha\in (0,1)$. 
In this case we can construct the optimal function $\gamma$:
in order to have $\vp_{\alpha}(t+\gamma(t))\le c\vp_{\alpha}(t)$ it
is necessary and sufficient that $t\lmto
(t+\gamma(t))^{\alpha}-t^{\alpha}$ is bounded
(observe that necessarily $\gamma(t)\le t$). By standard
calculus, this is equivalent to
\beqa
 t^{\alpha}\left( \alpha\frac{\gamma(t)}{t}+o\left(\frac{\gamma(t)}
 {t}\right)\right)\le c, \quad t\ge t_{\gamma},
\eeqa
which happens if and only if
\beqa
 \gamma(t)\le Ct^{1-\alpha}.
\eeqa
So, we can choose $\gamma_{\alpha,C}(t):=Ct^{1-\alpha}$ which meets the
requirements of the function $\gamma$ in the definition
of the $\tilde{\Delta}$-condition above, and no
$\tilde{\Delta}$ admissible function can grow faster than any
$\gamma_{\alpha,C}$. 

Note that the $\tilde{\Delta}$-condition imposes a restriction on the
growth of $\vp$: clearly we cannot reach the function $\vp(t)=e^t$
(defining $H^1$), which is natural in view of our results.

When $\vp$ satisfies the $\tilde{\Delta}$-condition,
we will see (Theorem \ref{thm1}) that the admissible functions
$\gamma$ allow us to construct subalgebras of multipliers,
i.e.\ algebras which bound the multipliers of $\mH_{\Phi}$ from below.
So it is natural to ask
whether something sensible can be said about the
multipliers when condition \eqref{strDel} is not satisfied.
Actually, it turns out that if $\gamma$ is not admissible
then the algebras constructed in Theorem \ref{thm1} do no longer
bound the multiplier algebra from below. However
it seems too ambitious to hope for an upper bound
in this situation. Still, under some
mild growth condition on the quotient $\vp(t+\gamma(t))/\vp(t)$
we can obtain such an upper bound.

\begin{definition}
A convex, strictly increasing function $\vp:\R\lra \R^+$
with $\lim_{t\to\infty}\vp(t)/t=+\infty$ is said to satisfy
the {\it $\tilde{\nabla}$-condition}
if there is 
a strictly increasing
concave function $\gamma: \R^+\lra \R^+$ with $\lim_{s\to \infty} 
\gamma(s)=\infty$ and an $\eps>0$ such that
for all $s\ge s_{\gamma}$
\bea\label{strNab}
 \frac{\vp(s+\gamma(s))}{\vp(s)}\ge \log^{1+\eps} \vp(s).
\eea
A function $\gamma$ will be called {\it $\tilde{\nabla}$-admissible} if
\eqref{strNab} holds for suitable $s_{\gamma}$ and $\eps>0$.
\end{definition}

Let us discuss the $\tilde{\nabla}$-admissible functions for the model case
$\vp_{\alpha}(s)=e^{s^{\alpha}}$. The condition \eqref{strNab} is
equivalent to
\beqa
 e^{s^{\alpha}((1+\gamma/s)^{\alpha}-1)}\ge s^{(1+\eps)\alpha},\quad s\ge 
 s_{\gamma},
\eeqa
so that for example
\beqa
 \gamma(s):=\gamma_{\alpha,\eta}^{(\log)}(s):=
 (1+\eta) s^{1-\alpha}\log s,\quad s\ge s_{\gamma},
\eeqa
with $\eta>0$ works. 
Of course for ``bigger'' functions $\gamma$ 
the estimate in \eqref{strNab} is more easily true.
However, as we will see later on,
we will use reciprocals of $\tilde{\nabla}$-admissible functions
to find upper bounds for the multipliers. Hence we will get more
precise bounds with small $\tilde{\nabla}$-admissible 
functions $\gamma$. The reader may check that the
function $\gamma_{\alpha,\eta}^{(\log)}$ is not
$\tilde{\nabla}$-admissible for $\eta=0$.

\subsection{Decreasing rearrangements}\label{S2.1}

We will need some  facts on decreasing rearrangements
(for the material of
this subsection see for instance \cite[pp 114-120]{LT}).
Let us begin by recalling some basic facts.

Let $(\Omega,\Sigma,\mu)$ be a measure space (we will 
only be concerned with $\T$ equipped with the usual normalized
Lebesgue measure on Borel sets). With a measurable
function $f$ on $\Omega$ one associates the distribution function
\beqa
 \mu_f(t)=\mu\{\omega\in\Omega:|f(\omega)|>t\},\quad t>0,
\eeqa
and the decreasing rearrangement 
\beqa
 f^*(s)=\inf\{t>0:\mu_f(t)\le s\},\quad s\in (0,\mu(\Omega)).
\eeqa
Note that the decreasing rearrangement of $f$ is a positive 
function.
The main consequence on rearrangement invariant spaces
that we will use in the context of (Hardy-)Orlicz spaces is that
\bea\label{rearrinv}
 \int_{\T}\Phi(|f(t)|)dt=\int_0^1\Phi(f^*(t))dt.
\eea
(We have used here that $(\Phi\circ |f|)^*=\Phi\circ f^*$
since $\Phi$ is increasing.) We will also use the fact
that when $\Phi$ is convexe, then $L^*_{\Phi}$ is rearrangement
invariant \cite[p.120]{LT}.

The reader should notice that the initial measure space we
are interested in, i.e.\ $\T$ equipped with the Lebesgue measure,
can be identified with the measure space $[0,1]$ (equipped
with normalized Lebesgue measure) on which 
the decreasing rearrangement $f^*$ is defined. Thus
$f^*$ is 
obtained from $|f|$ by a measure preserving mapping $\alpha$
from $\Omega:=\T$ (i.e.\ $\Omega:=[0,1]$)
onto itself, 
so that $f^*(t)=|f(\alpha(t))|$. 

Moreover, it is clear that if 
a function $g$ multiplies on $\mH_{\Phi}$ then so does its
outer part (the modular $\mathcal{J}_{\Phi}$
in Hardy(-Orlicz) spaces does not ``feel''
the inner part). In all what follows we will thus assume
that the multiplier is outer (it could even be assumed
that $|g|\ge 1$). Let $g^*$ be the 
decreasing rearrangement of a multiplier $g$, and let $\alpha_g$ be
a corresponding measure preserving mapping of $\T$
(or $[0,1]$) onto itself. 
We have already mentioned that
$g$ is automatically in $\mH_{\Phi}$ and so $|g|\in L_{\Phi}$. By
\eqref{rearrinv} 
the function
$g^*$ is also in $L_{\Phi}$, and so we can associate
with it the outer function $G$ in $\mH_{\Phi}$ 
such that $|G|=g^*$ a.e.\ on $\T$.

\begin{lemma}
If $g\in\mult(\mH_{\Phi})$ then the outer function $G$
defined by $|G|=g^*$ a.e.\ $\T$ is also a multiplier on $\mH_{\Phi}$.
\end{lemma}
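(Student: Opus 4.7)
The plan is to transport the multiplier property of $g$ to one for $G$ via a measure-preserving transformation $\alpha:\T\to\T$ realizing the rearrangement, i.e.\ satisfying $g^{*}(t)=|g(\alpha(t))|$ a.e. We shall assume (modulo a technicality discussed below) that $\alpha$ is essentially invertible.

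Fix an arbitrary $f\in\mH_{\Phi}$; we must prove $Gf\in\mH_{\Phi}$. Since $G,f\in N^+$ and $N^+$ is closed under products, $Gf\in N^+$, so it suffices to verify $\int_{\T}\Phi(|Gf|)\,dm<\infty$. The crux is to build a companion $\tilde f\in\mH_{\Phi}$ with $|\tilde f|=|f|\circ\alpha^{-1}$ a.e.\ on $\T$. Since $\alpha^{-1}$ is measure-preserving, $|f|\circ\alpha^{-1}$ is equimeasurable with $|f|$, so $\log(|f|\circ\alpha^{-1})\in L^1(\T)$ (using $\log|f|\in L^1(\T)$, valid for any $f\in N^+$), and the outer function $\tilde f$ with this boundary modulus is well-defined via \eqref{outer} and lies in $N^+$. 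By equimeasurability together with \eqref{rearrinv},
\[
\int_{\T}\Phi(|\tilde f|)\,dm=\int_{\T}\Phi(|f|\circ\alpha^{-1})\,dm=\int_{\T}\Phi(|f|)\,dm<\infty,
\]
so $\tilde f\in\mH_{\Phi}$.

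Since $g\in\mult(\mH_{\Phi})$, $g\tilde f\in\mH_{\Phi}$, that is, $\int_{\T}\Phi\bigl(|g(t)|\,|f|(\alpha^{-1}(t))\bigr)dm(t)<\infty$. Performing the measure-preserving change of variable $t=\alpha(s)$ and using $|g(\alpha(s))|=g^{*}(s)=|G(s)|$, one obtains
\[
\int_{\T}\Phi\bigl(|g(t)|\,|f|(\alpha^{-1}(t))\bigr)dm(t)=\int_{\T}\Phi\bigl(|G(s)|\,|f(s)|\bigr)dm(s)=\int_{\T}\Phi(|Gf|)\,dm,
\]
which is the desired finiteness. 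Combined with $Gf\in N^+$, this yields $Gf\in\mH_{\Phi}$ and completes the proof.

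The principal (and essentially only) technical point is the invertibility of $\alpha$: if $|g|$ has level sets of positive measure, the map realizing the decreasing rearrangement need not be bijective. This standard issue in rearrangement theory is circumvented either by a small perturbation/approximation of $g$, or more directly by using that $(\T,m)$ is a standard probability space on which equimeasurable pairs can always be related by a bimeasurable measure-preserving transformation modulo null sets; neither workaround affects the substance of the argument.
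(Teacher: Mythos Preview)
Your proof is correct and follows exactly the same route as the paper's: transport the multiplier property via a measure-preserving map $\alpha$ with $g^{*}=|g\circ\alpha|$, build the companion outer function in $\mH_{\Phi}$ with boundary modulus $|f|\circ\alpha^{-1}$, apply the multiplier hypothesis to it, and undo the change of variable. Your write-up is in fact more careful than the paper's (you explicitly note the $N^+$ closure step and flag the invertibility of $\alpha$, which the paper uses without comment).
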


More generally it can be said that for every outer multiplier $g$ 
and every measure preserving mapping $\alpha:\T\to\T$, the
outer function $g_{\alpha}$ with $|g_{\alpha}|=|g\circ\alpha|$
a.e.\ on $\T$ is also a multiplier.

\begin{proof}
Let $\alpha$ be the measure preserving mapping
such that $g^*=|g\circ\alpha|$ a.e.\ on $\T$.
Let $f\in \mH_{\Phi}$ with outer part $F$. Then
the outer function with modulus $|F\circ\alpha^{-1}|$ is
also in $\mH_{\Phi}$ (with same modular $\mathcal{J}_{\Phi}$ as $f$), and
\beqa
 \int_{\T}\Phi(|f(\zeta)G(\zeta)|)dm(\zeta)
 &=&\int_{\T}\Phi(|f(\zeta)g(\alpha(\zeta))|)dm(\zeta)
 =\int_{\T}\Phi(|f\circ\alpha^{-1}(\zeta)g(\zeta)|)dm(\zeta)\\
 &=&\int_{\T}\Phi(|F(\zeta)g(\zeta)|)dm(\zeta)<\infty
\eeqa
\end{proof}
In the later discussions we can (and will) thus suppose that
the multiplier is outer, its only singularity is in 
$\zeta=1$, and $\theta\to |g(e^{i\theta})|$ is decreasing 
in $\theta$ on $(0,2\pi)$ ($2\pi$ corresponding to $1$).

\section{Multipliers - upper and lower bounds}\label{S3}

In this section 
we will give a general construction to obtain
multipliers of a Hardy-Orlicz space with a defining function $\vp$
satisfying the $\tilde{\Delta}$-condition. 
More precisely,
the $\tilde{\Delta}$-admissible functions $\gamma$ associated with $\vp$
allow the construction of defining functions
$\Psi_{\gamma}$ of Hardy-Orlicz spaces contained in the multiplier
algebra. 
Since $\mult(\mH_{\Phi})$ is an algebra it is clear that
when $\mH_{\Psi_{\gamma}}\subset \mult(\mH_{\Phi})$ then also 
$\Alg(\mH_{\Psi_{\gamma}})\subset \mult (\mH_{\Phi})$.
Here $\Alg(\mathcal{F})$ denotes the
algebra generated by a family of functions $\mathcal{F}$

Then,
using the $\tilde{\nabla}$-condition, we will 
give an inclusion of the multiplier algebra of $\mH_{\Phi}$
in another Hardy-Orlicz space the defining function of which 
$\Psi=\psi\circ\log$
is associated with $\tilde{\nabla}$-admissible functions.
Again, since $\mult(\mH_{\Phi})$ is an algebra, if it contains
$f\in \mH_{\Psi}$ then it contains also all powers $f^n$, $n\in\N$,
and so does $\mH_{\Psi}$.
Hence, setting $\Psi^{[n]}(t)=\psi(n\log t)$,
the inclusion $\mult(\mH_{\Phi})\subset \mH_{\Psi}$
implies that 
\beqa
 \mult(\mH_{\Phi})\subset\bigcap_{n\in \N^*}\mH_{\Psi^{[n]}}.
\eeqa

We will discuss both results in the model case $\vp(t)=\vp_{\alpha}(t)$.

Let us begin with a lower bound on the multiplier algebra.

\begin{theorem}\label{thm1}
Let $\vp$ be a strongly convex function satisfying the 
$\tilde{\Delta}$-condition and $\gamma$ a
$\tilde{\Delta}$-admissible function. 
Then 
\beqa
 \Alg(\mathcal{H}_{\Psi_{\gamma}})\subset \mult(\mathcal{H}_{\Phi})
\eeqa
where $\Psi_{\gamma}=\vp\circ\gamma^{-1}\circ\log$. 
\end{theorem}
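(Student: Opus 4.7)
The plan is to reduce the statement to the single-function inclusion $\mathcal{H}_{\Psi_\gamma}\subset\mult(\mathcal{H}_{\Phi})$: since the multiplier algebra of any space containing the constants is automatically an algebra, this upgrades for free to $\Alg(\mathcal{H}_{\Psi_\gamma})\subset\mult(\mathcal{H}_\Phi)$. So I fix $f\in\mathcal{H}_\Phi$ and $g\in\mathcal{H}_{\Psi_\gamma}$. Both spaces sit inside $N^+$, and $N^+$ is closed under products, so $fg\in N^+$; what remains is the modular estimate $\int_\T \vp(\log|f|+\log|g|)\,dm<\infty$ on boundary values.

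The heart of the argument is a pointwise dichotomy, splitting $\T$ according to whether $\log|g|$ is controlled by $\gamma(\log|f|)$ or the other way around. On the set $E_1=\{\log|g|\leq \gamma(\log|f|)\}$ (intersected with $\{\log|f|\geq t_\gamma\}$) one has $\log|fg|\leq \log|f|+\gamma(\log|f|)$, and the $\tilde{\Delta}$-condition applied at $t=\log|f|$ gives
\[
\vp(\log|fg|)\leq \vp\bigl(\log|f|+\gamma(\log|f|)\bigr)\leq c\,\vp(\log|f|) = c\,\Phi(|f|).
\]
On the complementary set $E_2=\{\log|g|>\gamma(\log|f|)\}$ (intersected with $\{\gamma^{-1}(\log|g|)\geq t_\gamma\}$), the inequality inverts to $\log|f|<\gamma^{-1}(\log|g|)$, which is the key point. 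Setting $t=\gamma^{-1}(\log|g|)$, so that $\log|g|=\gamma(t)$ and $\log|fg|<t+\gamma(t)$, a second application of the $\tilde{\Delta}$-condition, now at this $t$, yields
\[
\vp(\log|fg|)\leq c\,\vp(t) = c\,\vp\bigl(\gamma^{-1}(\log|g|)\bigr)=c\,\Psi_\gamma(|g|).
\]
The leftover region, where either $\log|f|<t_\gamma$ or $\gamma^{-1}(\log|g|)<t_\gamma$, has $|f|$ or $|g|$ uniformly bounded and thus contributes only an $O(1)$ term since $\T$ has finite measure. Summing gives the pointwise-plus-correction bound
\[
\int_\T \Phi(|fg|)\,dm \leq c\int_\T \Phi(|f|)\,dm + c\int_\T \Psi_\gamma(|g|)\,dm + O(1)<\infty,
\]
placing $fg$ in $\mathcal{H}_\Phi$.

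There is no genuine obstacle; the only subtle point is the symmetric role of $\gamma$ and $\gamma^{-1}$, which is exactly what forces the definition $\Psi_\gamma=\vp\circ\gamma^{-1}\circ\log$ rather than anything involving $\gamma$ itself. Concavity of $\gamma$ makes $\gamma^{-1}$ a well-defined, strictly increasing, convex function on the range of $\gamma$, so $\Psi_\gamma$ is itself a bona fide defining function, and the same admissibility constant $c$ from \eqref{strDel} serves in both branches of the dichotomy. The one housekeeping item I would explicitly verify is that $\vp\circ\gamma^{-1}$ inherits the superlinear-growth requirement imposed at the start of Section \ref{S2}, which is immediate in the model case $\vp_\alpha$, $\gamma_{\alpha,C}$ and follows in general from $\gamma$ being sublinear at infinity.
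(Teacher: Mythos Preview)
Your argument is correct and follows essentially the same route as the paper: the same dichotomy on $\T$ according to whether $\log|g|\le\gamma(\log|f|)$ or not, the same two applications of the $\tilde\Delta$-condition (at $t=\log|f|$ on the first piece and at $t=\gamma^{-1}(\log|g|)$ on the second), and the same handling of the residual sets where the threshold $t_\gamma$ fails. One small wording slip: on the leftover region you say ``$|f|$ \emph{or} $|g|$ uniformly bounded'', but in fact both are (and must be, to bound $\vp(\log|fg|)$); this follows immediately from the defining inequalities on each piece, exactly as in the paper.
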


\begin{remarks}
1) 
Obviously, $\mult(\mH_{\Psi})$ contains
the algebra generated by the
union over all $\mH_{\Psi_{\gamma}}$ where $\gamma$ is
admissible for $\vp$. 

2) In general $\psi_{\gamma}:=
\vp\circ\gamma^{-1}$ does not satisfy $\tilde{\Delta}_2$
and so we have to distinguish a priori
in the theorem between $\mH_{\Psi_{\gamma}}$,
$\mH^{\circ}_{\Psi_{\gamma}}$ and $\mH^*_{\Psi_{\gamma}}$.
This is of no harm since all these
spaces are of course included in $\Alg(\mH_{\Psi_{\gamma}})$ (and
we are of course interested in the biggest lower bound); see 
also some comments concerning the $\tilde{\Delta}_2$-condition
of $\psi_{\gamma}$ in the model case at the end of this section.
\end{remarks}

\begin{proof}
Let $f\in \mathcal{H}_{\Phi}$ and $g\in \mathcal{H}_{\Psi_{\gamma}}$.
Let $A:=\{\zeta\in\T:\log|g(\zeta)|\le \gamma(\log|f(\zeta)|)\}$ and
$A_0:=\{\zeta\in A:\log|f(\zeta)|\ge t_{\gamma}\}$.
Then
\beqa
 \int_{A_0}\vp(\log|gf|)dm
  &=&\int_{A_0}\vp(\log|f|+\log|g|)
 \le
 \int_{A_0}\vp\left(\log|f(\zeta)|+\gamma(\log|f(\zeta)|)\right) dm\\
 &\le&
 c\int_{A_0}\vp(\log|f(\zeta)|) dm,
\eeqa
and so the integral on $A_0$ converges.
Since on $A\setminus A_0$, $|f|$ and $|g|$ are bounded
(so that $\vp(\log|fg|)$ is bounded), the integral
also converges on $A$.

We will now consider the part of the integral on $B:=\T\setminus A$.
Set $B_0:=\{\zeta\in B:\gamma^{-1}(\log|g(\zeta)|)\ge t_{\gamma}\}$.
Clearly $\log|f(\zeta)|<\gamma^{-1}(\log|g(\zeta)|)$ on $B$.
Hence
\beqa
 \int_{B_0} \vp(\log|f(\zeta)|+\log|g(\zeta)|)dm
 &\le&\int_{B_0}\vp(\underbrace{\gamma^{-1}(\log|g(\zeta)|)}_{x}
   +\log|g(\zeta)|) dm\\
 &\le& c\int_{B_0}\vp(\gamma^{-1}(\log|g(\zeta)|))dm.
\eeqa
Since by assumption $g\in \mathcal{H}_{\Psi}$ where 
$\Psi=\vp\circ\gamma^{-1}\circ\log$, the last integral converges.
Since on $B\setminus B_0$ the functions $|f|$ and $|g|$ are bounded,
the integral converges also on $B$.
\end{proof}

Note that if $\vp$ satisfies the $\Delta_2$-condition (the case of
big Hardy-Orlicz spaces), then, as we have already mentioned, we can 
choose $\gamma(t)=t$. Hence
$\Psi_{\gamma}(t)=\vp\circ\gamma^{-1}\circ\log(t)=\vp(\log(t))=
\Phi(t)$, which confirms that we are in the algebra situation.

In order to show that Theorem \ref{thm1} is sharp we shall prove
that if a function $\gamma$ is not admissible for $\vp$, then
$\mH_{\Psi_{\gamma}}$ contains functions that do not multiply on
$\mH_{\Phi}$. Recall that $\Psi_{\gamma}=\vp\circ\gamma^{-1}\circ\log$.

\begin{proposition}\label{propn3.4}
Let $\vp$ be a strongly convex function, and 
let $\gamma$ be a concave function on $\R$ strictly increasing to
infinity such that 
\beqa
 \limsup_{t\to\infty}\frac{\vp(t+\gamma(t))}{\vp(t)}=+\infty.
\eeqa
Then there exists $g\in\mH_{\Psi_{\gamma}}$ such that 
$g\notin\mult(\mH_{\Phi})$.
\end{proposition}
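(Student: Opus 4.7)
\begin{proof*}
\textbf{Proof plan.} The strategy is to construct the bad multiplier directly on the circle by pushing all the mass onto a carefully chosen sequence of small disjoint arcs, and then to pass to outer functions. The hypothesis supplies a sequence $t_n\nearrow\infty$ along which $\vp(t_n+\gamma(t_n))/\vp(t_n)$ is unbounded. By passing to a subsequence I may assume
\beqa
 \frac{\vp(t_n+\gamma(t_n))}{\vp(t_n)}\ge 4^n,\qquad n\ge 1.
\eeqa
Choose pairwise disjoint arcs $E_n\subset \T$ with Lebesgue measure $|E_n|=1/(\vp(t_n)\,2^n)$ (taking $t_n$ large enough that $\vp(t_n)\ge 1$ and $\sum|E_n|\le 2\pi$). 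Define nonnegative measurable functions $u,v$ on $\T$ by
\beqa
 u(\zeta)=\sum_{n\ge 1} t_n\,\chi_{E_n}(\zeta),\qquad
 v(\zeta)=\sum_{n\ge 1} \gamma(t_n)\,\chi_{E_n}(\zeta).
\eeqa

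Next I verify that the outer functions $F:=[\,e^{u}\,]$ and $G:=[\,e^{v}\,]$ associated with the boundary moduli $|F|=e^u$, $|G|=e^v$ (via formula \eqref{outer}) belong to $\mH_{\Phi}$ and $\mH_{\Psi_{\gamma}}$ respectively. The condition $\log|F|=u\in L^1(\T)$ holds because $t_n/\vp(t_n)\to 0$ (as $\vp$ is superlinear), and similarly $v\in L^1(\T)$ since $\gamma$ concave forces $\gamma(t_n)\le C t_n$ for large $n$. For the modular estimates,
\beqa
 \int_{\T}\Phi(|F|)\,dm=\int_{\T}\vp(u)\,dm\le \vp(0)+\sum_{n}|E_n|\,\vp(t_n)=\vp(0)+\sum_n 2^{-n}<\infty,
\eeqa
and analogously
\beqa
 \int_{\T}\Psi_{\gamma}(|G|)\,dm=\int_{\T}\vp(\gamma^{-1}(v))\,dm\le \vp(\gamma^{-1}(0))+\sum_{n}|E_n|\,\vp(t_n)<\infty,
\eeqa
using $\gamma^{-1}(v)=t_n$ on $E_n$. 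Hence $F\in \mH_{\Phi}$ and $G\in \mH_{\Psi_{\gamma}}$.

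Finally I show that $G$ does not multiply on $\mH_{\Phi}$ by testing it against $F$. Since $F,G$ are outer in $N^+$, the product $FG$ is outer in $N^+$ with boundary modulus $|FG|=e^{u+v}$, so
\beqa
 \int_{\T}\Phi(|FG|)\,dm
 =\int_{\T}\vp(u+v)\,dm
 \ge\sum_{n}|E_n|\,\vp(t_n+\gamma(t_n))
 \ge\sum_{n}\frac{\vp(t_n+\gamma(t_n))}{\vp(t_n)\,2^n}
 \ge\sum_{n}2^{n}=\infty.
\eeqa
Therefore $FG\notin\mH_{\Phi}$, so taking $g:=G$ produces the desired element of $\mH_{\Psi_{\gamma}}\setminus\mult(\mH_{\Phi})$. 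The only delicate point is the simultaneous balancing of the three sums, which is handled by the single choice $|E_n|=\vp(t_n)^{-1}2^{-n}$ together with the thinning of the original sequence $\{t_n\}$; once the numerical ratios $\vp(t_n+\gamma(t_n))/\vp(t_n)$ have been forced to grow faster than $2^n$, everything else is bookkeeping.
\end{proof*}
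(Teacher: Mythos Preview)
Your argument is correct and follows essentially the same route as the paper's proof: both construct outer functions whose boundary moduli are step functions supported on a sequence of disjoint arcs of measure $\asymp \vp(t_n)^{-1}$, and both exploit that on those arcs the ``bad'' multiplier has $\log|g|=\gamma(t_n)$ so that the product integral picks up the ratio $\vp(t_n+\gamma(t_n))/\vp(t_n)$. The only cosmetic difference is the bookkeeping of the series: the paper thins so that this ratio exceeds $n$ and takes $|\sigma_n|=(n^2\vp(t_n))^{-1}$, yielding the divergent harmonic series, whereas you thin so that the ratio exceeds $4^n$ and take $|E_n|=(2^n\vp(t_n))^{-1}$, yielding a divergent geometric series; either choice works.
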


\begin{proof}
The proof follows some ideas of the proof of \cite[Theorem 1.3]{HK}.
By the hypotheses, there exists a sequence $(t_n)_n$ such that
\beqa
 \left\{\begin{array}{l}
 \frac{\D \vp(t_n+\gamma(t_n))}{\D \vp(t_n)}\ge n,\\
 \vp(t_n)\ge\frac{\D 2^n}{\D n^2}.
 \end{array} 
 \right.
\eeqa
Set $\eps_n=(n^2\vp(t_n))^{-1}$. Clearly $\eps_n\le 2^{-n}$ so that
there exists a sequence $(\sigma_n)_n$ of disjoint measurable
subsets of $\T$ with $|\sigma_n|=\eps_n$, where $|\cdot|$
denotes the Lebesgue measure. Let $f$ be the outer
function the modulus of which is equal to $\sum_n
e^{t_n}\chi_{\sigma_n}$ on $\bigcup_n\sigma_n$ and $1$ otherwise
($\chi_{E}$ is the characteristic function of a measurable
set $E$). Then $\int_{\T}\Phi(|f|)dm=\sum_n\vp(t_n)|\sigma_n|
=\sum_n\vp(t_n)\eps_n=\sum_n\frac{1}{n^2}<\infty$. Hence $f\in
\mH_{\Phi}$.

In the same way, we let $g$ be the outer function the modulus of
which takes the values $\sum_n e^{\gamma(t_n)}\chi_{\sigma_n}$
on $\bigcup_n\sigma_n$ and $1$ elsewhere.
Then $\int_{\T}\vp(\gamma^{-1}(\log|g|))dm=\sum_n\vp(t_n)|\sigma_n|
<\infty$ (the reader might have observed that this is equal
to $\int_{\T}\Phi(|f|)dm$).
Let us compute the modular of their product
\beqa
 \int_{\T}\Phi(|fg|)dm&=&\int_{\T}\vp(\log|f|+\log|g|)dm
 =\sum_n \vp(t_n+\gamma(t_n))|\sigma_n|\ge
 \sum_n n\vp(t_n)\eps_n\\
 &=&\sum\frac{1}{n}=+\infty.
\eeqa

\end{proof}

We shall discuss this proposition further on an example in 
Section \ref{sectoptim}.

The next result discusses an upper bound of the multiplier algebra
via $\tilde{\nabla}$-admissibility.

\begin{theorem}\label{thm3.2}
Let $\vp$ be a strongly convex function satisfying the
$\tilde{\nabla}$-condition and $\gamma$ a $\tilde{\nabla}$-admissible
function.
Then
\beqa
 \mult(\mH_{\Phi})\subset  \bigcap_{n\in\N^*}\mH_{\Psi_{\gamma}^{[n]}},
\eeqa
where, as before, $\Psi_{\gamma}=\vp\circ\gamma^{-1}\circ\log$,
and $\Psi_{\gamma}^{[n]}(t)=\vp(\gamma^{-1}(n\log t))$.
\end{theorem}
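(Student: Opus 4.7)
The plan is to argue by contradiction, first making a crucial reduction to the case $n=1$. Since $\mult(\mH_{\Phi})$ is an algebra, $g\in\mult(\mH_{\Phi})$ implies $g^n\in\mult(\mH_{\Phi})$ for every $n\in\N^*$, and the identity $\Psi_{\gamma}^{[n]}(|g|)=\vp(\gamma^{-1}(n\log|g|))=\Psi_{\gamma}(|g^n|)$ shows that $g\in\mH_{\Psi_{\gamma}^{[n]}}$ is equivalent to $g^n\in\mH_{\Psi_{\gamma}}$. Hence it suffices to prove $\mult(\mH_{\Phi})\subset\mH_{\Psi_{\gamma}}$. Assume for contradiction $g\in\mult(\mH_{\Phi})$ with $\int_{\T}\vp(\gamma^{-1}(\log|g|))\,dm=+\infty$. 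Since the modular $\mathcal{J}_{\Psi_{\gamma}}$ is rearrangement invariant, by the lemma of Subsection \ref{S2.1} I may assume $g$ is outer with $|g|$ decreasing on $\T\cong[0,1]$. The goal is then to build $f\in\mH_{\Phi}$ such that $fg\notin\mH_{\Phi}$.

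Set $X(s):=\vp(\gamma^{-1}(\log|g(s)|))$, a positive decreasing function on $[0,1]$ with $\int_0^1 X\,ds=+\infty$. Fix the thresholds $T_j:=e^{j^{2/(1+\eps)}}$, so that $\log^{1+\eps}T_j=j^2$, and form the level sets $E_j:=\{s:T_j\le X(s)<T_{j+1}\}$. Writing $A_j:=\int_{E_j}X\,dm$, one has $\sum_{j\ge 1}A_j=+\infty$, which forces $J':=\{j:A_j\ge 1/j^2\}$ to be infinite (otherwise $\sum A_j\le C+\sum 1/j^2<\infty$). For each $j\in J'$ select $\tilde E_j\subset E_j$ with $\int_{\tilde E_j}X\,dm=1/j^2$; this is possible because $X\chi_{E_j}\,dm$ is an atomless positive measure of total mass $A_j\ge 1/j^2$. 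Finally, let $f$ be the outer function in $N^+$ whose modulus equals $e^{\gamma^{-1}(\log|g|)}$ on $\bigcup_{j\in J'}\tilde E_j$ and $1$ elsewhere.

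The verifications are then direct. On $\bigcup\tilde E_j$ one has $\vp(\log|f|)=X$, so $\int_{\T}\vp(\log|f|)\,dm\le\sum_{j\in J'}1/j^2+\vp(0)<\infty$; the superlinearity $\vp(t)/t\to\infty$ makes $\log|f|=\vp^{-1}(X)=o(X)$ where $X$ is large, so $\log|f|\in L^1(\T)$ and $f\in\mH_{\Phi}$. On each $\tilde E_j$, setting $u:=\gamma^{-1}(\log|g|)$ (so that $\log|g|=\gamma(u)$), the $\tilde\nabla$-inequality yields
\beqa
 \vp(\log|fg|)=\vp(u+\gamma(u))\ge \vp(u)\log^{1+\eps}\vp(u)=X\log^{1+\eps}X\ge j^2 X,
\eeqa
hence $\int_{\T}\vp(\log|fg|)\,dm\ge\sum_{j\in J'}j^2\int_{\tilde E_j}X\,dm=\sum_{j\in J'}1=+\infty$, while on the complement $\log|f|=0$ and the integral is dominated by $\int_{\T}\vp(\log|g|)\,dm<\infty$ (since $g\in\mH_{\Phi}$). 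This contradicts $fg\in\mH_{\Phi}$. The main obstacle is the delicate calibration of thresholds and weights: the choice $T_j=e^{j^{2/(1+\eps)}}$ is engineered so that the $\log^{1+\eps}$ gain provided by the $\tilde\nabla$-condition produces exactly the factor $j^2$ that upgrades the summable controls $1/j^2$ (which keep $f$ in $\mH_{\Phi}$) into the divergent series $\sum 1$; the extraction of the subsequence $J'$ is unavoidable since the individual $A_j$ admit no a priori lower bound.
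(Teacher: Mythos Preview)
Your proof is correct and follows a genuinely different route from the paper's. The paper argues directly: after the same reduction to $|g|$ outer and decreasing, it tests $g$ against a \emph{single} function $f$ with $\log|f|=\vp^{-1}(w)$, where $w(t)=e/(t\log^{1+\eta}(e/t))$ for some $0<\eta<\eps$ is a fixed integrable weight just below the critical $1/t$. Since $\vp(\log|fg|)$ is then decreasing and integrable, it is pointwise bounded by $e/t$ (Lemma~\ref{corlem3.4}); unwinding this with the $\tilde\nabla$-condition yields the pointwise estimate $\Psi_{\gamma}(|g(e^{it})|)\le w(t)$, which is strictly stronger than integrability. Your argument instead proceeds by contradiction and is closer in spirit to the proof of Proposition~\ref{propn3.4}: you slice the assumed non-integrable $X=\Psi_{\gamma}(|g|)$ into level sets with thresholds calibrated so that the $\log^{1+\eps}$ gain from the $\tilde\nabla$-condition converts the summable controls $1/j^2$ into the divergent $\sum 1$, building $f$ adapted to $g$. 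The paper's route is shorter and delivers an explicit majorant for $\Psi_{\gamma}(|g|)$; yours is more elementary in that it avoids the ``decreasing integrable $\Rightarrow \le 1/t$'' lemma, and in fact your level-set construction does not genuinely require $|g|$ to be decreasing after the initial rearrangement reduction. Two small points deserve to be made explicit: the $\tilde\nabla$-inequality needs $u=\gamma^{-1}(\log|g|)\ge s_{\gamma}$, which holds on $E_j$ only for $j$ beyond some $j_0$ (harmless, since you may restrict $J'$ accordingly), and the conclusion $\sum_j A_j=\infty$ uses that $\{X<T_1\}$ contributes at most $T_1$ to $\int X$.
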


Before proving the theorem, 
we will cite the following well-known property.

\begin{lemma}\label{corlem3.4}
Every positive decreasing function
on $(0,1]$ which is integrable on $(0,1)$ is ne\-cessarily
bounded by the function $t\lmto 1/t$ on $(0,t_0)$ for a
suitable $t_0\in (0,1)$.
\end{lemma}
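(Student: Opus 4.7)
The plan is to argue by contradiction. Suppose, for some positive decreasing $f$ on $(0,1]$ with $\int_0^1 f < \infty$, the conclusion fails. Then for every $t_0 \in (0,1)$ there exists some $t \in (0,t_0)$ with $f(t) > 1/t$. Iterating, I would extract a strictly decreasing sequence $t_n \downarrow 0$ such that $f(t_n) > 1/t_n$ for every $n$, and then pass to a subsequence (relabelling) so that $t_{n+1} \le t_n/2$ for all $n$.

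The second step exploits monotonicity. Since $f$ is decreasing, for every $s \in [t_{n+1}, t_n]$ we have $f(s) \ge f(t_n) > 1/t_n$, hence
$$\int_{t_{n+1}}^{t_n} f(s)\,ds \; \ge \; \frac{t_n - t_{n+1}}{t_n} \; \ge \; \frac{1}{2}.$$
Summing over $n \ge 1$ gives $\int_0^{t_1} f(s)\,ds \ge \sum_{n=1}^\infty \tfrac{1}{2} = +\infty$, contradicting the assumed integrability of $f$ on $(0,1)$. Therefore a threshold $t_0 \in (0,1)$ with $f(t) \le 1/t$ on $(0,t_0)$ must exist.

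There is essentially no obstacle here: the only tool is that monotonicity allows the pointwise lower bound $f(t_n) > 1/t_n$ to be promoted to a lower bound valid on the whole interval $[t_{n+1}, t_n]$, and the dyadic thinning $t_{n+1} \le t_n/2$ then turns a sum of local integrals into a divergent geometric sum. This makes the assertion a routine one-line consequence of monotonicity and dyadic decomposition, consistent with its status as a cited well-known property.
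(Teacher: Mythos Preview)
Your argument is correct. Note, however, that the paper does not actually supply a proof of this lemma: it is stated as a ``well-known property'' and then used immediately in the proof of Theorem~\ref{thm3.2}. So there is no approach to compare against.

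That said, your contradiction-and-dyadic-subsequence argument, while perfectly valid, is more elaborate than necessary. The standard one-line proof exploits monotonicity more directly: since $f$ is decreasing and nonnegative,
\[
t\,f(t)\;\le\;\int_0^t f(s)\,ds \;\longrightarrow\; 0 \quad \text{as } t\to 0^+,
\]
the limit holding because $f$ is integrable on $(0,1)$. Hence $t f(t)<1$ for all sufficiently small $t$, which is exactly the claim (and in fact gives the stronger conclusion $f(t)=o(1/t)$). Your approach reaches the same endpoint via a sequence extraction and a telescoping lower bound on $\int_0^{t_1} f$; the direct inequality $tf(t)\le\int_0^t f$ avoids both the subsequence and the summation.
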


\begin{proof}[Proof of Theorem \ref{thm3.2}]
Let $g\in \mult(\mH_{\Phi})$. 
As before we will suppose $g$ outer and $|g|$ equal to its
decreasing rearrangement. This will allows us to test $g$ 
against functions in $\mH_{\Phi}$
that approach the maximal possible growth of the
class. Since we have
identified $\T$ with $[0,1]$, we will set
\bea\label{eqw}
 w(t)=\frac{e}{t\log^{1+\eta}\frac{e}{t}},\quad t\in (0,1],
\eea
where $\eta\in (0,\eps)$ is fixed ($\eps$ being the value
associated with the $\tilde{\nabla}$-admissible function $\gamma$).
It is clear that $w\in L^1$. Let $f$ be the outer function in $\mH_{\Phi}$
such that $\log|f(e^{2\pi it})|=\vp^{-1}(w(t))$ a.e.\ on $(0,1]$.

Since $g$ is a multiplier on $\mH_{\Phi}$ and $|f|$, $|g|$ are
decreasing on $(0,1]$, the function
\beqa
 \phi:=\vp(\log|fg|)=\vp(\log|f|+\log|g|)
\eeqa
is decreasing  
on $(0,1]$ 
and integrable on this interval. 
By Lemma \ref{corlem3.4}, we get
\beqa
 \phi(t)\le \frac{e}{t},\quad t\in (0,t_0)
\eeqa
(where $t_0\in (0,1)$ is fixed suitably).

Hence 
\beqa
 \vp(\vp^{-1}(w)+\log|g|)\le\frac{e}{t},
\eeqa
and so
\beqa
 \log|g|\le \vp^{-1}(e/t)-\vp^{-1}(w).
\eeqa
Hence
\bea\label{eqestimlog}
 \Psi_{\gamma}(|g|)=\vp(\gamma^{-1}(\log|g|))
 \le \vp(\gamma^{-1}(\vp^{-1}(e/t)-\vp^{-1}(w))).
\eea
Since $\vp$ satisfies the $\tilde{\nabla}$-condition and
$\gamma$ is admissible we have 
\beqa
 \vp(s+\gamma(s))\ge \vp(s)\log^{1+\eps}\vp(s),\quad s\ge s_{\gamma},
\eeqa
so that
\beqa
 s\ge \gamma^{-1}(\vp^{-1}(\vp(s)\log^{1+\eps} \vp(s))-s),\quad s\ge 
 s_{\gamma}.
\eeqa
Applying $\vp$ to this inequality and 
choosing $s$ such that $w=w(t)=\vp(s)$ we obtain
\bea\label{eqestimwlogw}
 w\ge \vp(\gamma^{-1}(\vp^{-1}(w\log^{1+\eps}w)-\vp^{-1}(w))).
\eea
We will check that $1/t\le w\log^{1+\eps}w$. From \eqref{eqw}, we get
\beqa
 w(t)\log^{1+\eps}w(t)
 &=&\frac{e}{t\log^{1+\eta}(e/t)}\log^{1+\eps}\Big(\frac{e}{t
  \log^{1+\eta}(e/t)}\Big)\\
 &=&\frac{e}{t\log^{1+\eta}(e/t)}
  \Big(\log(e/t)-\log\log^{1+\eta}(e/t)\Big)^{1+\eps}\\
 &=&\frac{e}{t}\log^{\eps-\eta}(e/t)\Big(1-\frac{\log
   \log^{1+\eta}(e/t)}{\log^{1+\eta}(e/t)}\Big)\\
 &\ge&\frac{e}{t}
\eeqa
for $t$ sufficiently small since $\eps>\eta$. 
Injecting this into \eqref{eqestimwlogw}
we get 
\beqa
 w\ge \vp(\gamma^{-1}(\vp^{-1}(e/t)-\vp^{-1}(w))).
\eeqa
We recognize here the right hand side of \eqref{eqestimlog} so that
\beqa
 \Psi_{\gamma}(|g(e^{it})|)\le w(t)=\frac{1}{t\log^{1+\eps}(1/t)}.
\eeqa
Since $w\in L^1$, we conclude $g\in \mH_{\psi}$.
By the remarks in the introduction to this section we also have
$g\in \mH_{\Psi^{[n]}}$ for every $n\in\N^*$.
\end{proof}

{\bf Example.} Let us consider the model case $\vp_{\alpha}$,
$0<\alpha<1$. We have already constructed
the optimal function $\gamma_{\alpha,C}=Ct^{1-\alpha}$.
Obviously,
$\gamma_{\alpha}^{-1}(t)=(t/C)^{1/(1-\alpha)}$,
and $\psi_{\alpha,C}(t):=\vp_{\alpha}\circ \gamma_{\alpha,C}^{-1}(t)
=e^{dt^{\alpha/(1-\alpha)}}=\vp_{\alpha/(1-\alpha)}^d$, where 
$d=C^{-\alpha/(1-\alpha)}$. This together with Theorem \ref{thm1}
yields the first inclusion of the proposition below.

Note that $\Alg(\mH_{\Phi_{\alpha/(1-\alpha)}})=\bigcup_{d>0}
\mH_{\vp^d_{\alpha/(1-\alpha)}\circ\log}$ 
(one can
use that $fg=(1/2)\big( (f+g)^2-f^2-g^2\big)$ and $h\in \mH_{\alpha/
(1-\alpha)}$ implies $h^2\in \mH_{\vp^d_{\alpha/(1-\alpha)}}$ with
$d=(1/2)^{\alpha/(1-\alpha)}$).

For the second one we introduce another defining function. Set
\beqa
 \vp_{\alpha,\delta}^{(\log)}(t)=e^{{\delta}\left(
 \frac{t}{\log t}\right)^{\alpha/(1-\alpha)}},\quad t\ge t_0,
\eeqa
where $\delta>0$. 
Clearly, if $\beta<\alpha/(1-\alpha)$ then 
\beqa
 \vp_{\beta}(t)=e^{t^{\beta}}<<\vp_{\alpha,\delta}^{(\log)}(t),
 \quad t\to\infty.
\eeqa
Hence 
by the remarks on orderings of (Hardy-)Orlicz spaces
in Section \ref{S2}
\bea\label{HOincl}
 \mH_{\vp_{\alpha,\delta}^{(\log)}\circ\log}\subsetneqq 
 \mH_{\Phi_{\beta}}.
\eea

\begin{proposition}
Let $0<\alpha<1$.  
Then 
\beqa
 \Alg(\mH_{\Phi_{\alpha/(1-\alpha)}})
 =\bigcup_{d>0}\mH_{\vp_{\alpha/(1-\alpha)}^d\circ\log}\subset
 \mult(\mH_{\Phi_{\alpha}})
 \subset \bigcap_{\delta>0}\mH_{{\vp_{\alpha,\delta}^{(\log)}\circ\log}}.
\eeqa
\end{proposition}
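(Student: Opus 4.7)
The statement packages three assertions, and my plan is to establish each in turn, invoking Theorems \ref{thm1} and \ref{thm3.2} together with the model-case calculations placed immediately before the proposition. Throughout write $\beta:=\alpha/(1-\alpha)$.

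Equality $\Alg(\mH_{\Phi_\beta})=\bigcup_{d>0}\mH_{\vp^d_\beta\circ\log}$: the argument is qualitative. For ``$\supset$'' I use polarization $fg=\tfrac12((f+g)^2-f^2-g^2)$ together with the direct check that $h^2\in\mH_{\vp^d_\beta\circ\log}$ for every $h\in\mH_{\Phi_\beta}$ with $d=(1/2)^\beta$ (the substitution yields $\int_\T e^{d(2\log|h|)^\beta}\,dm=\int_\T e^{(\log|h|)^\beta}\,dm<\infty$), then induct on the number of factors; sums remain in the union because each $\mH_{\vp^d_\beta\circ\log}$ is a vector space. For ``$\subset$'' I take $n$-th roots: given $h\in\mH_{\vp^d_\beta\circ\log}$, choose $n\in\N^*$ with $n^{-\beta}\le d$, let $h_1$ be the outer function with $|h_1|=|h|^{1/n}$ on $\T$, so that $\int_\T \Phi_\beta(|h_1|)\,dm=\int_\T e^{n^{-\beta}(\log|h|)^\beta}\,dm\le\int_\T e^{d(\log|h|)^\beta}\,dm<\infty$ forces $h_1\in\mH_{\Phi_\beta}$, and then $h=I\cdot h_1^n\in\Alg(\mH_{\Phi_\beta})$ with $I$ the inner factor of $h$. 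The middle inclusion is now immediate from Theorem \ref{thm1} applied with the $\tilde\Delta$-admissible functions $\gamma_{\alpha,C}(t)=Ct^{1-\alpha}$, for which $\Psi_{\gamma_{\alpha,C}}=\vp^d_\beta\circ\log$ with $d=C^{-\alpha/(1-\alpha)}$, as already computed; sweeping $C>0$ delivers $\Alg(\mH_{\Phi_\beta})\subset\mult(\mH_{\Phi_\alpha})$.

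For the right-hand inclusion I apply Theorem \ref{thm3.2} with $\gamma=\gamma^{(\log)}_{\alpha,\eta}(s)=(1+\eta)s^{1-\alpha}\log s$, which is $\tilde\nabla$-admissible. A standard asymptotic inversion (take logs and iterate once) gives $\gamma^{-1}(t)\sim\bigl((1-\alpha)/(1+\eta)\bigr)^{1/(1-\alpha)}(t/\log t)^{1/(1-\alpha)}$ as $t\to\infty$, whence
\beqa
 \Psi^{[n]}_\gamma(s)=\vp_\alpha(\gamma^{-1}(n\log s))=\exp\!\Big(c_{\alpha,\eta,n}\bigl(\tfrac{\log s}{\log\log s}\bigr)^{\alpha/(1-\alpha)}(1+o(1))\Big)
\eeqa
with $c_{\alpha,\eta,n}=n^{\alpha/(1-\alpha)}\bigl((1-\alpha)/(1+\eta)\bigr)^{\alpha/(1-\alpha)}$. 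Up to the $(1+o(1))$ factor in the exponent this matches $\vp^{(\log)}_{\alpha,c_{\alpha,\eta,n}}\circ\log$; the ordering criterion from Section \ref{S2} (finiteness of the $\limsup$ of the ratio of defining functions) then yields $\mH_{\Psi^{[n]}_\gamma}\subset\mH_{\vp^{(\log)}_{\alpha,\delta}\circ\log}$ whenever $\delta<c_{\alpha,\eta,n}$. Since $c_{\alpha,\eta,n}\to\infty$ as $n\to\infty$, any prescribed $\delta>0$ is achievable, and Theorem \ref{thm3.2} concludes $\mult(\mH_{\Phi_\alpha})\subset\mH_{\vp^{(\log)}_{\alpha,\delta}\circ\log}$ for every $\delta>0$.

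The main obstacle I anticipate is purely technical: the careful bookkeeping in the asymptotic inversion of $\gamma^{(\log)}_{\alpha,\eta}$, to ensure that the $(1+o(1))$ correction in the exponent is genuinely absorbed when comparing $\Psi^{[n]}_\gamma$ with $\vp^{(\log)}_{\alpha,\delta}\circ\log$ via the $\limsup$ criterion. The $\tilde\Delta_2$-subtleties noted after Theorem \ref{thm1} do not intervene, since the target $\Alg(\mH_{\Phi_\beta})$ absorbs them automatically.
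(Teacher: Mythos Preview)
Your proposal is correct and mirrors the paper's proof: the equality via polarization and the squaring map (as sketched just before the proposition), the first inclusion via Theorem~\ref{thm1} with $\gamma_{\alpha,C}$, and the second via Theorem~\ref{thm3.2} with $\gamma^{(\log)}_{\alpha,\eta}$; your asymptotic for $\gamma^{-1}$ and the device of letting $n\to\infty$ so that $c_{\alpha,\eta,n}$ exceeds any prescribed $\delta$ are exactly what the paper does (the paper leaves that last step somewhat implicit, merely noting that $\Psi^{[n]}_\gamma$ is sandwiched between two functions $\vp^{(\log)}_{\alpha,\delta_i}\circ\log$).

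Two minor points to tidy up. First, your labels ``$\subset$'' and ``$\supset$'' in the equality argument are swapped: polarization plus squaring shows $\Alg(\mH_{\Phi_\beta})\subset\bigcup_d$, while the $n$-th-root argument gives $\bigcup_d\subset\Alg(\mH_{\Phi_\beta})$. Second, the justification ``sums remain in the union because each $\mH_{\vp^d_\beta\circ\log}$ is a vector space'' is only valid when $\beta=\alpha/(1-\alpha)\le 1$, i.e.\ $\alpha\le 1/2$; for $\alpha>1/2$ the function $\vp^d_\beta$ fails $\tilde\Delta_2$ and the individual classes need not be closed under addition. What remains true is that the \emph{union} over $d>0$ is closed under sums: from $|f+g|\le 2\max(|f|,|g|)$ and $(\log 2+\log w)^\beta=(1+o(1))(\log w)^\beta$ one sees that a slight decrease of $d$ absorbs the factor $2$. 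The paper glosses over this point too, so this is not a divergence from its approach.
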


Before proving this result, we give the following consequence
which is maybe easier to state and follows immediately from
this proposition and \eqref{HOincl}. 

\begin{corollary}\label{cor3.5}
Let $0<\alpha<1$ 
Then 
\beqa
 \Alg(\mH_{\Phi_{\alpha/(1-\alpha)}})
 =\bigcup_{d>0}\mH_{\vp_{\alpha/(1-\alpha)}^d\circ\log}\subset
 \mult(\mH_{\Phi_{\alpha}})
 \subset \bigcap_{0<\beta<\alpha/(1-\alpha)}\mH_{\Phi_{\beta}}.
\eeqa
\end{corollary}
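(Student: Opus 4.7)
The plan is to obtain the corollary directly from the preceding proposition combined with the chain of inclusions recorded in \eqref{HOincl}; no new estimates are required. The only task is to assemble the two pieces and handle the direction of the ordering correctly.

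First I would invoke the proposition to get both the equality $\Alg(\mH_{\Phi_{\alpha/(1-\alpha)}})=\bigcup_{d>0}\mH_{\vp_{\alpha/(1-\alpha)}^d\circ\log}$ and the lower bound $\Alg(\mH_{\Phi_{\alpha/(1-\alpha)}})\subset \mult(\mH_{\Phi_\alpha})$. This already gives the first half of the claim, with no further computation.

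For the upper bound I would again start from the proposition, which yields $\mult(\mH_{\Phi_\alpha}) \subset \bigcap_{\delta>0}\mH_{\vp_{\alpha,\delta}^{(\log)}\circ\log}$, and then replace the intersection on the right by the (a priori larger) intersection $\bigcap_{0<\beta<\alpha/(1-\alpha)}\mH_{\Phi_\beta}$. Fixing $\beta\in(0,\alpha/(1-\alpha))$ and any $\delta>0$, the asymptotic comparison $(t/\log t)^{\alpha/(1-\alpha)}\gg t^\beta$ as $t\to\infty$ is immediate from $\beta<\alpha/(1-\alpha)$, since the factor $\log^{\alpha/(1-\alpha)} t$ is of smaller order than any positive power of $t$. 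This is precisely the content of \eqref{HOincl}, which together with the ordering criterion for Orlicz spaces recalled in Section \ref{S2} (as used in \cite[Theorem 1.3]{HK}) delivers $\mH_{\vp_{\alpha,\delta}^{(\log)}\circ\log}\subset \mH_{\Phi_\beta}$. Chaining the inclusions produces $\mult(\mH_{\Phi_\alpha})\subset \mH_{\Phi_\beta}$ for every admissible $\beta$, and intersecting over $\beta$ finishes the argument.

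I do not anticipate any genuine obstacle, as all the substantive work has been done in Theorem \ref{thm3.2} and in the explicit computation of a $\tilde{\nabla}$-admissible function in the model case $\vp=\vp_\alpha$ carried out just before the proposition. The only point that requires a moment's care is the direction of \eqref{HOincl}, namely $\subsetneqq$ and not $\supsetneqq$, which is consistent with the fact that $\vp_{\alpha,\delta}^{(\log)}$ grows faster at infinity than $\vp_\beta$ so that the corresponding Hardy--Orlicz space is strictly smaller.
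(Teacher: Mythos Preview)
Your proposal is correct and matches the paper's own justification essentially verbatim: the paper states that the corollary ``follows immediately from this proposition and \eqref{HOincl}'', which is precisely the two-step chaining you describe. There is nothing to add.
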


Corollary \ref{cor3.5} shows that Theorem \ref{thm3.2} is optimal
in the sense that it allows 
to separate those Hardy-Orlicz spaces contained in the scale
$(\mH_{\Phi_{\alpha}})_{\alpha>0}$
and multiplying on $\mH_{\Phi_{1/2}}$ from those contained
in the scale that do not multiply on $\mH_{\Phi_{1/2}}$.
We could of course have replaced $\mH_{\Phi_{\beta}}$ by
$\bigcap_{n\in\N^*}\mH_{\Phi_{\beta}^{[n]}}$.

\begin{proof}[Proof of the proposition]
As already indicated, the first inclusion is established by the
above discussion. Let us consider the second inclusion.
Recall that for $\vp_{\alpha}$ the function
\beqa
 \gamma(s)=(1+\eta)s^{1-{\alpha}}\log s
\eeqa
is $\tilde{\nabla}$-admissible whenever $\eta>0$. 
Set $\Psi_{\gamma}=\vp\circ\gamma^{-1}\circ\log$.
It can be checked that
\beqa
 \gamma^{-1}(u)\sim \left(\frac{1-\alpha}{1+\eta}
 \frac{u}{\log u}\right)^{1/(1-\alpha)}, \quad u\to\infty.
\eeqa
So
\beqa
 \Psi_{\gamma}(t)=\exp\left[\left(\frac{1-\alpha}{1+\eta}
 \frac{\log t}{\log\log t}\right)^{\alpha/{1-\alpha}}(1+o(t))\right],
 \quad t\to\infty.
\eeqa
Since $\gamma$ is $\tilde{\nabla}$-admissible for  arbitrary $\eta>0$
and $o(t)$ is arbitrarily small, 
we can take
\beqa
 \Psi_{\gamma}(t)=\exp\left[(1-\delta)C_{\alpha}
 \left(\frac{\log t}{\log\log t}\right)^{\alpha/(1-\alpha)}\right].
\eeqa
where $\delta>0$ is arbitrary and $C_{\alpha}=(1-\alpha)^{\alpha/(1-\alpha)}$,
From Theorem \ref{thm3.2} we deduce that $\mult(\mH_{\Phi})
\subset\mH_{\Psi_{\gamma}}$. And by the general remarks we also
have $\mult(\mH_{\Phi})
\subset\mH_{\Psi_{\gamma}^{[n]}}$, where
\beqa
 \Psi_{\gamma}^{[n]}(t)=\exp\left[(1-\delta)C_{\alpha}
 \left(\frac{n\log t}{\log(n\log t)}\right)^{\alpha/(1-\alpha)}\right]
 =\exp\left[c
 \left(\frac{\log t}{\log n+\log\log t)}\right)^{\alpha/(1-\alpha)}\right]
\eeqa
with a suitable constant $c$. Clearly there exist
$\delta_1,\delta_2$ such that $\vp_{\alpha,\delta_1}^{(\log)}(\log t)
\le \Psi_{\gamma}^{[n]}(t)\le \vp_{\alpha,\delta_2}^{(\log)}(\log t)$
from which the remaining inclusion of the proposition 
follows. 
\end{proof}

The example $\vp_{\alpha}$ is quite instructive concerning
the behaviour of the multiplier algebra.
Clearly the index $\alpha/(1-\alpha)$ that we can associate
with $\vp_{\alpha}$ increases with 
$\alpha$ 
(we will see in Proposition \ref{prop3.1}
that for reasonable strongly convex functions --- and 
$\vp_{\alpha}$ are reasonable in our situation --- 
that the multiplier algebra increases with the space).
A crucial point is $\alpha=1/2$. Then $\psi_{1/2,1}(t)=
\vp_{1/2}\circ\gamma_{1/2,1}^{-1}(t)=\vp_1(t)=e^t$
which is the defining function for $H^1$, so that the
multiplier algebra of $\mH_{\Phi_{1/2}}$ contains $\Alg(H^1)
=\bigcup_{p>0}H^p$ (and it is contained in $\mH_{\Phi_{\beta}}$
for any $\beta<1$, and even in smaller Hardy-Orlicz spaces
defined by $\vp_{\alpha,\delta}^{(\log)}$). 

When $\alpha>1/2$, then by the corollary we have $\mult(\mH_{\alpha})
\subset \bigcap_{n\in\N^*}\mH_{\Phi^{[n]}_{1}}=\bigcap_{p>0}H^p$. 
Choosing $\beta\in (1,\alpha/(1-\alpha))$
we can even deduce that 
$\mult(\mH_{\alpha})\subset \mH_{\Phi_{\beta}}$ 
which is extremely small and close to $\Hi$.

Conversely, if $\alpha<1/2$, then since $\alpha/(1-\alpha)<1$,
we get $\vp_{\alpha/(1-\alpha)}(t)
=o(e^{pt})$ which yields $H^p\subset \mH_{\Psi_{\alpha/(1-\alpha)}}$ for
every $p>0$ and hence $\bigcup_{p>0}H^p\subset 
\mH_{\Psi_{\alpha/(1-\alpha)}}\subset \mult(\mH_{\Phi_{\alpha}})$.
So, in this case, the multiplier algebra is very big containing 
every $H^p$, $p>0$, and even bigger spaces.

Corollary \ref{cor3.5} tells us
that in this example the multiplier algebras
vary from very small spaces when $\mH_{\Phi}$ is close to
the classical Hardy spaces to very big ones when we approach
the big Hardy-Orlicz spaces.

Another observation can be made concerning the critical value $\alpha=1/2$.
For $\alpha\le 1/2$ the function $\xi:t\lmto (t+2)^{\alpha/(1-\alpha)}
-t^{\alpha/(1-\alpha)}$ is bounded so that $\psi_{\alpha,C}=\vp_{\alpha}
\circ\gamma_{\alpha,C}^{-1}$ satisfies that $\Delta_2$ condition, whereas
for $\alpha>1/2$ the function $\xi$ is unbounded and so
$\psi_{\alpha,C}\notin \Delta_2$.

A similar observation can be made in the context of Theorem \ref{thm3.2}.
By the above proof, the $\tilde{\nabla}$-admissible function $\gamma_{\alpha,
\eta}^{(\log)}$ satisfies $\big(\gamma_{\alpha,
\eta}^{(\log)}\big)^{-1}(t)\sim c (t/\log t)^{1/(1-\alpha)}$ for 
a suitable constant $c$. The function $\xi_{(\log)}:t\lmto
((t+2)/\log(t+2))^{\alpha/(1-\alpha)}-(t/\log t)^{\alpha/(1-\alpha)}$
is bounded if and only if $\alpha\le 1/2$ so that $\psi_{\gamma}
=\vp_{\alpha}\circ\big(\gamma_{\alpha,\eta}^{(\log)}\big)^{-1}$
satisfies the $\Delta_2$ condition if and only if $\alpha\le 1/2$.

\section{Orderings on multipliers}\label{sectord}

\subsection{A general result}

We begin the section with a general fact. Pick $\Phi_1=\vp_1\circ\log$
and $\Phi_2=\vp_2\circ\log$ two defining functions of Hardy-Orlicz
spaces, where $\vp_1,\vp_2$ are strongly convex. In Section \ref{S2}
we have mentioned that the condition 
\bea\label{phi1phi2}
 \limsup_{t\to\infty}\frac{\Phi_2(t)}{\Phi_1(t)}<+\infty 
\eea 
is equivalent 
to $\mH_{\Phi_1}\subset\mH_{\Phi_2}$. 
Replacing $\Phi_i$ by $\vp_i$ we get the same kind of estimate
for $\vp_2/\vp_1$ in \eqref{phi1phi2}. It is also possible to
replace moreover $\vp_1$ by $\vp_1+\vp_2$ without changing
$\mH_{\Phi_1}$, so that we can suppose that $h:=\vp_1-\vp_2$ is
strongly convex, and even that $h'=\vp_1'-\vp_2'$ tends to infinity 
at infinity.
This does unfortunately  not always imply
that $\vp^{-2}-\vp^{-1}$ is increasing.
However, if we assume the later to hold then the ordering of the
Hardy-Orlicz spaces is inherited by their respective multipliers.

\begin{proposition}\label{prop3.1}
Let $\vp_1,\vp_2$ be strongly convex functions.
If $\vp_2^{-1}-\vp_1^{-1}$ is increasing
then
\beqa
 \mult(\mH_{\Phi_1})\subset\mult(\mH_{\Phi_2}).
\eeqa
\end{proposition}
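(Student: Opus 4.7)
The plan is to use $\vp_2^{-1}-\vp_1^{-1}$ increasing as a pointwise comparison device: given $g\in\mult(\mH_{\Phi_1})$ and $f\in\mH_{\Phi_2}$, I build an auxiliary function $\tilde f\in\mH_{\Phi_1}$ that has the same $\Phi_1$-modular as the $\Phi_2$-modular of $f$, apply the hypothesis that $g$ multiplies on $\mH_{\Phi_1}$ to get $\tilde f g\in\mH_{\Phi_1}$, and then translate this back via the ordering hypothesis into an estimate for $\vp_2(\log|fg|)$.

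More precisely, I may assume (by the rearrangement lemma of Section \ref{S2.1}) that $g$ is outer, $|g|\ge 1$, and likewise $f$ is outer; it suffices to verify $\int_\T\vp_2(\log|fg|)\,dm<\infty$. Define $\tilde f$ as the outer function determined by
\[
 \log|\tilde f|:=\vp_1^{-1}\bigl(\vp_2(\log_+|f|)\bigr),
\]
where $\log_+=\max(\log,0)$ ensures $\vp_1^{-1}$ is evaluated only on $[\vp_2(0),\infty)$, where it is well defined. Then
\[
 \int_\T\vp_1(\log|\tilde f|)\,dm=\int_\T\vp_2(\log_+|f|)\,dm\le\int_\T\vp_2(\log|f|)\,dm+\vp_2(0)<\infty,
\]
so $\tilde f\in\mH_{\Phi_1}$. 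Since $g$ multiplies on $\mH_{\Phi_1}$, we have $\tilde f g\in\mH_{\Phi_1}$, and in particular $\int_\T\vp_1(\log|\tilde f g|)\,dm<\infty$.

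The key pointwise step is on $E:=\{|f|\ge 1\}$, where $u:=\log|f|\ge 0$, $v:=\log|g|\ge 0$, $\tilde u:=\log|\tilde f|=\vp_1^{-1}(\vp_2(u))$. Setting $s:=\vp_2(u)$ and $s':=\vp_1(\tilde u+v)\ge\vp_1(\tilde u)=s$, the monotonicity of $\vp_2^{-1}-\vp_1^{-1}$ yields
\[
 u-\tilde u=\vp_2^{-1}(s)-\vp_1^{-1}(s)\le\vp_2^{-1}(s')-\vp_1^{-1}(s')=\vp_2^{-1}(s')-(\tilde u+v),
\]
which rearranges to $u+v\le\vp_2^{-1}(s')$, that is,
\[
 \vp_2(\log|fg|)\le\vp_1(\log|\tilde f g|)\quad\text{on }E.
\]
Thus $\int_E\vp_2(\log|fg|)\,dm\le\int_E\vp_1(\log|\tilde f g|)\,dm<\infty$. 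On the complement $\{|f|<1\}$, one has $|fg|\le|g|$ and $g\in\mult(\mH_{\Phi_1})\subset\mH_{\Phi_1}\subset\mH_{\Phi_2}$ (the last inclusion coming from the hypothesis, which implies $\vp_1\ge\vp_2$ up to bounded terms), so $\vp_2(\log|fg|)\le\vp_2(\log_+|g|)$ is integrable. Combining both pieces gives $fg\in\mH_{\Phi_2}$, as required.

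The only delicate point is bookkeeping around where $\vp_1^{-1}$, $\vp_2^{-1}$ and the hypothesis ``$\vp_2^{-1}-\vp_1^{-1}$ increasing'' actually apply — this is why the construction is done with $\log_+$ and the estimate is carried out on $\{|f|\ge 1\}$, with the complementary piece absorbed into $\mH_{\Phi_2}$ via the a priori containment $g\in\mH_{\Phi_2}$. Everything else is a one-line change of variable in the ordering hypothesis.
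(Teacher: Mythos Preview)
Your approach is essentially the same as the paper's: you build the same auxiliary function (the paper writes $|F|=\Phi_1^{-1}(\Phi_2(|f|))$, which on $\{|f|\ge1\}$ coincides with your $\tilde f$), apply the multiplier hypothesis to get $\tilde f g\in\mH_{\Phi_1}$, and then use the monotonicity of $\vp_2^{-1}-\vp_1^{-1}$ to obtain the key pointwise bound $\vp_2(\log|fg|)\le\vp_1(\log|\tilde f g|)$. The paper phrases this last step as monotonicity of $\Theta(t)=\Phi_2^{-1}(\Phi_1(t))/t$, which is exactly your inequality rewritten multiplicatively; the content is identical.

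There is one small gap. On $\{|f|<1\}$ you invoke $\mH_{\Phi_1}\subset\mH_{\Phi_2}$, asserting that the hypothesis gives ``$\vp_1\ge\vp_2$ up to bounded terms''. In fact the hypothesis only yields $\vp_1(t)\ge\vp_2(t+c_0)$ for some constant $c_0$ (take $c_0=\vp_2^{-1}(s_0)-\vp_1^{-1}(s_0)$ and use that the difference is increasing), and if $c_0<0$ this does \emph{not} force $\limsup\vp_2/\vp_1<\infty$: for any strongly convex $\vp_2$ one has $\vp_2(t)-\vp_2(t-|c_0|)\ge |c_0|\,\vp_2'(t-|c_0|)\to\infty$. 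The paper sidesteps this by not splitting at all: since $|g|\ge1$ gives $|F|\le|gF|$, the monotonicity of $\Theta$ applies on all of $\T$. You can repair your argument in the same way: on $\{|f|<1\}$ one has $\tilde u=\vp_1^{-1}(\vp_2(0))$; set $s=\vp_2(0)=\vp_1(\tilde u)$ and $s'=\vp_1(\tilde u+v)\ge s$, and the same monotonicity step gives $v\le\vp_2^{-1}(s')$, hence $\vp_2(\log|fg|)\le\vp_2(v)\le\vp_1(\tilde u+v)=\vp_1(\log|\tilde fg|)$. No appeal to $g\in\mH_{\Phi_2}$ is needed.
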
 

\begin{proof}
We can suppose that $\vp_1$ and $\vp_2$ are differentiable.
By the hypothesis $\vp_2^{-1}-{\vp}_1^{-1}$ is
a strictly increasing function, so that $(\vp_2^{-1}-\vp_1^{-1})'\ge 0$.
Hence $((\vp_2^{-1})'(\vp_1(u))-(\vp_1^{-1})'
(\vp_1(u)))\vp_1'(u)\ge 0$ 
(note that obviously $\vp_1'\ge 0$). Hence
\beqa
 (\vp_2^{-1}\circ\vp_1)'(u)\ge 1,
\eeqa
for sufficiently big $u$. Setting $\xi(u):=\vp_2^{-1}\circ\vp_1\circ\log(u)$
we deduce from this that
$\xi'(u)\ge 1/u$ for big $u$.
Define now $\Xi=\Phi_2^{-1}\circ\Phi_1$. Then we get $(\log\circ\Xi)'(u)
=\xi'(u)\ge \frac{1}{u}$, and hence the function 
\beqa
 \Theta : t\lmto \frac{\Xi(t)}{t}=\frac{\Phi_2^{-1}\circ\Phi_1(t)}{t}
\eeqa
is increasing.

After these preliminary remarks let us come to the proof of the proposition.
Suppose $g\in\mult(\mH_{\Phi_1})$. Let $f\in\mH_{\Phi_2}$. We have
to check that $gf\in\mH_{\Phi_2}$. 
Define a measurable function on $\T$ by $f_0=\Phi_1^{-1}(\Phi_2(|f|))$.
Clearly there exists an outer function $F$ the modulus of which is
equal to $f_0$ almost everywhere on $\T$, and by construction
$F\in \mH_{\Phi_1}$. Since $g$ multiplies on $\mH_{\Phi_1}$ we
have $gF\in\mH_{\Phi_1}$. For the remaining argument we will suppose
$|g|\ge 1$ almost everywhere on $\T$ (we have already seen that $g$
can be supposed outer; it is also clear that $g$ is a mulitplier 
if and only if the outer function the modulus of which is equal
to $\max(1,|g|)$ is a multiplier). With this assumption we have
$|F|\le |gF|$ and since $\Theta$ is increasing we get
\beqa
 \frac{\Xi(|F|)}{|F|}\le\frac{\Xi(|gF|)}{|gF|},
\eeqa
i.e.
\beqa
 |g| \Phi_2^{-1}(\Phi_1(|F|))\le \Phi_2^{-1}(\Phi_1(|gF|)),
\eeqa
from where we get
\beqa
 \int_{\T}\Phi_2(|gf|)dm=\int_{\T}\Phi_2(|g|\Phi_2^{-1}(\Phi_1(|F|)))dm
 \le \int \Phi_1(|gF|)dm<\infty.
\eeqa
\end{proof}

Any ``reasonable'' pair of strongly convex functions
with $\mH_{\Phi_1}\subset \mH_{\Phi_2}$ satisfies the hypothesis
of Proposition \ref{propn3.4}. A simple example is
$\vp_1(t)=e^{t^{\alpha}}$
and $\vp_2(t)=e^{t^{\beta}}$ with $\alpha>\beta$ (this follows already
from Corollary \ref{cor3.5}). Another example is
given by $\vp_{1}(t)=e^{t}$ and
$\vp_2(t)=e^{t/\log t}$ for which it is simple to check that
$(\vp_2^{-1}-\vp_1^{-1})'\ge 0$.

A natural question raised by the preceding proposition is
whether there exist Hardy-Orlicz spaces for which the
ordering of the multipliers is in the opposite direction
of that of the Hardy-Orlicz spaces themselves.
The next subsection answers this question by
giving examples where the ordering of the multipliers
cannot be pulled back to the underlying Hardy-Orlicz spaces

\subsection{Small multipliers on large Hardy-Orlicz spaces}

Here we show that
there are large Hardy-Orlicz
spaces for which the multipliers reduce to $\Hi$, so that in
general the multipliers are not necessarily ordered as the
Hardy-Orlicz spaces (when these can be ordered).

Let us make more precise what we mean by ``large'' here. In fact
it turns out that the Hardy-Orlicz spaces we consider can be very
far from $\bigcup_{p>0}H^p$. We have to introduce a new class of
strongly convex functions. Set
\beqa
 \vp_{\alpha}(t)=e^{(\ln t)^{\alpha}},\quad t\ge t_0.
\eeqa
These functions define Hardy-Orlicz spaces $\mH_{\Phi_{\alpha}}$,
where $\Phi_{\alpha}=\vp_{\alpha}\circ\log$,
which are much bigger
than those associated with $\vp(t)=e^{t^{\alpha}}$
considered in Section \ref{S3}.
Let us observe that for every (concave) function $\gamma$
strictly increasing to infinity and such that $\gamma(t)=o(t)$
we have
\beqa
 \frac{\vp_{\alpha}(t+\gamma(t))}{\vp_{\alpha}(t)}
 &=&\exp\left[(\ln t)^{\alpha}\Big((1+\frac{\ln(1+\gamma(t)/t)}{\ln
     t})^{\alpha}-1\Big)\right]\\
 &=&\exp\left[\alpha\frac{\gamma(t)}{
     t(\ln t)^{1-\alpha}}+o(\frac{\gamma(t)}{
     t(\ln t)^{1-\alpha}})\right].
\eeqa
which is bounded when $\gamma(t)\le Ct (\ln t)^{\alpha-1}$.
The latter expression suggests that we could attain a growth
faster than the identity when $\alpha>1$.
In this situation the above
computations, which work under the
assumption $\gamma(t)=o(t)$, are of course false.
Anyway, since we
are only interested in concave $\gamma$ it is not worth while seeking
$\gamma$ growing faster than the identity.
So, $\vp_{\alpha}$ satisfies the $\tilde{\Delta}$-condition and
for instance $\gamma_p(t)=t^{p}$ is admissible for every $p\in (0,1)$.
Using Theorem \ref{thm1}
this implies that the multipliers of $\mH_{\Phi_{\alpha}}$ contain
a very big space: $\mH_{\Psi_{\alpha,p}}$, where
$\Psi_{\alpha,p}=\psi_{\alpha,p}\circ\log$, $\psi_{\alpha,p}(t)
=\vp_{\alpha}\circ\gamma_p^{-1}(t)=
e^{(1/p)^{\alpha}(\ln t)^{\alpha}}=\vp_{\alpha}^{(1/p)^{\alpha}}$.

We have the following result.

\begin{theorem}\label{thm4.3}
For every $\beta>1$ 
there exists a strongly convex function $\vp$ 
satisfying the $\tilde{\Delta}_2$-condition such that 
$\mH_{\Phi}$ contains $\mH_{\Phi_{\beta}}$ and  
\beqa
 \mult(\mH_{\Phi}) =\Hi.
\eeqa
\end{theorem}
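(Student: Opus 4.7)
The plan is to build $\vp$ as a smooth perturbation of a curve dominated by $\vp_\beta$, carved with a sequence of downward dips followed by maximally steep $\tilde{\Delta}_2$-climbs. Concretely, I would fix a fast-growing sequence $A_k\uparrow\infty$, slowly growing widths $H_k=\lfloor c\log k\rfloor$, and multipliers $C_k=k^4$. On each interval $[A_k,A_k+H_k]$ I declare $\log\vp$ to be linear of slope $(\log M)/2$, which is the largest slope allowed by the $\tilde{\Delta}_2$-condition and forces
\beqa
 \vp(A_k+H_k)/\vp(A_k)=M^{H_k/2}\ge C_k.
\eeqa
Between the top of the $k$-th climb (where I set $\vp(A_k+H_k)=\vp_\beta(A_k)$) and the bottom of the $(k+1)$-st dip (where I set $\vp(A_{k+1})=\vp_\beta(A_{k+1})/C_{k+1}$) I let $\vp$ grow only slowly. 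Monotonicity and the global bound $\vp\le c\vp_\beta$ are arranged by choosing $A_{k+1}$ large enough that $(\log A_{k+1})^\beta\ge(\log A_k)^\beta+\log C_{k+1}$; strong convexity is obtained by smoothing the corners.

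The $\tilde{\Delta}_2$-condition then holds by construction (the climbs are exactly at the extremal slope, the dips are flatter), and $\vp\le c\vp_\beta$ gives $\mH_{\Phi_\beta}\subset\mH_\Phi$ via the ordering criterion recalled in Section~\ref{S2}. Since $\vp$ satisfies $\tilde{\Delta}_2$, the inclusion $\Hi\subset\mult(\mH_\Phi)$ is automatic, so I only need to show $\mult(\mH_\Phi)\subset\Hi$.

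Suppose, towards a contradiction, that $g\in\mult(\mH_\Phi)\setminus\Hi$. By Subsection~\ref{S2.1} I may assume $g$ is outer and $|g|$ equals its own decreasing rearrangement, so that $v(t):=\log|g(e^{2\pi it})|$ is decreasing on $(0,1]$ with $v(t)\to\infty$ as $t\to 0^+$. Imitating the proof of Proposition~\ref{propn3.4}, I would then, along a suitable subsequence of indices, pick disjoint measurable sets $\sigma_k\subset\{v\ge H_k\}$ of measures $\mu_k$ and let $f$ be the outer function with $\log|f|=A_k$ on $\sigma_k$ and $\log|f|=0$ elsewhere. A direct computation gives
\beqa
 \int_\T\vp(\log|f|)\,dm=\sum_k\vp(A_k)\mu_k,\qquad
 \int_\T\vp(\log|fg|)\,dm\ge\sum_k C_k\,\vp(A_k)\mu_k,
\eeqa
so choosing $\mu_k=1/(k^2\vp(A_k))$ would make $f\in\mH_\Phi$ while forcing the second sum to equal $\sum k^2=\infty$, contradicting $g\in\mult(\mH_\Phi)$.

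The main obstacle is the side constraint $\mu_k\le|\{v\ge H_k\}|=:s_k^*$, since $s_k^*$ depends on the unknown $g$ and $\int\vp(v)\,dm<\infty$ only yields the tail estimate $\vp(H_k)s_k^*\to 0$, which is weaker than what the argument above requires directly. To overcome this, the construction must be enriched so that, at every sufficiently large scale, $\vp$ carries jumps of every admissible integer width $H$; then, for each fixed $g$, one diagonally extracts an infinite sequence of indices $(k_j)$ along which $\vp(A_{k_j})s_{k_j}^*\ge 1/j^2$, exploiting that $A_k$ has been taken enormously larger than $H_k$ and that $\vp_\beta(A_k)$ grows faster than any decay rate of $s_k^*$ compatible with the constraint $g\in\mH_\Phi$. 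This diagonal matching of jumps to the decay rate of $v$ is the delicate step; once it is in place, the estimate above yields $\mult(\mH_\Phi)=\Hi$.
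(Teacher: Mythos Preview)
Your instinct to build $\vp$ by hand with alternating flat and steep pieces is right, and you have correctly located the obstruction in your version: once the widths $H_k$ of the steep pieces are fixed in advance, the side constraint $\mu_k\le s_k^*=|\{v\ge H_k\}|$ depends on the unknown $g$, and nothing in your construction guarantees $\vp(A_k)\ge 1/(k^2s_k^*)$. The diagonal enrichment you sketch (jumps of every integer width at every scale) could in principle be made to work, but you would still have to organise disjointness of the test sets $\sigma_{k_j}$ and carry out the extraction; as written, this step is a genuine gap, not a detail.

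The paper sidesteps this difficulty by a different and much simpler design of $\vp$. Instead of making $\log\vp$ piecewise linear with maximal $\tilde\Delta_2$-slope on prescribed short intervals, the paper makes $\vp$ \emph{itself} piecewise affine: on each $[t_n,t_{n+1})$ the slope is taken equal to the value $\vp(t_n)$. Convexity then gives, for \emph{every} node $t_n$ and \emph{every} $\gamma>0$,
\[
 \vp(t_n+\gamma)\ \ge\ \vp(t_n)+\vp(t_n)\gamma\ \ge\ \gamma\,\vp(t_n),
\]
so the ``jump ratio'' is at least $\gamma$, uniformly in $n$ and with no restriction on the width. This decouples the construction of $\vp$ from the multiplier $g$: given an unbounded $g$, one sets $\sigma_k=\{\log|g|\in[k,k+1)\}$ (automatically disjoint, positive measure), and only \emph{then} chooses $n_k$ so large that $\vp(t_{n_k})\ge 1/(k^2|\sigma_k|)$, which is always possible since $\vp(t_n)\to\infty$. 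The test function $f$ with $\log|f|=t_{n_k}$ on $\sigma_k'\subset\sigma_k$ of measure $1/(k^2\vp(t_{n_k}))$ is then in $\mH_\Phi$, while $\int\Phi(|fg|)\ge\sum_k k\,\vp(t_{n_k})|\sigma_k'|=\sum 1/k=\infty$. The growth of $t_{n+1}-t_n$ (taken like $e^{n^\gamma}$ with $\gamma>1/(\beta-1)$) is what forces $\vp\le c\,\vp_\beta$ and hence $\mH_{\Phi_\beta}\subset\mH_\Phi$, and the $\tilde\Delta_2$-condition follows since $\vp(t+2)\le\vp(t_{n+1}+2)\le 3\vp(t_{n+1})\le 3\vp(t_n)(1+(t_{n+1}-t_n))$, which is controlled.

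In short: the missing idea is to build the jump property $\vp(t_n+\gamma)\ge\gamma\,\vp(t_n)$ \emph{uniformly in the width $\gamma$} by setting $\vp'(t_n^+)=\vp(t_n)$; then the subsequence $(t_{n_k})$ can be chosen after $g$ is given, and your side-constraint problem disappears.
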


\begin{proof}
We begin by constructing the strongly convex function on
$\R^+$. Suppose $\vp(1)=1$.
Let $(t_n)$ be a sequence of positive real numbers tending
strictly to infinity and $t_1=1$. We will also assume that $(t_{n+1}-t_n)$
goes to infinity. The construction of $\vp$ goes inductively.
On each interval $I_n=[t_n,t_{n+1})$ the function is
affine with $\vp(t_n)=\lim_{t\to t_n^-}\vp(t)$ so that $\vp$ is 
continuous in $t_n$ and with slope $\vp(t_n)$ (the function
doubles its values from $t_n$ to $t_n+1$). This yields of
course a convex function the slope of which tends to infinity
from where we deduce that it is strongly convex. 
(It is clear how to extend $\vp$ to $\R_-$.)

Let us check that by a suitable choice of $(t_n)$ we obtain
a function $\vp$ tending more slowly to infinity than 
$\vp_{\beta}$. This will show that $\mH_{\Phi_{\beta}}\subset \mH_{\Phi}$.
Fix $\gamma >\frac{\D 1}{\D \beta -1}$.
By construction $\vp(t_{n+1})=\vp(t_n)(1+(t_{n+1}-t_n))$.
Set $t_{n+1}=t_n+e^{n^{\gamma}}-1$, 
so that $\vp(t_{n+1})=e^{n^{\gamma}}\vp(t_n)$,
and an immediate induction yields $\vp(t_n)=e^{\sum_{k=1}^{n-1}
k^{\gamma}}\vp(t_1)$, where  $\vp(t_1)=1$. It is well known that
$\sum_{k=1}^{n-1}k^{\gamma}\sim\frac{(n-1)^{\gamma+1}}{\gamma+1}$, from
where we deduce that $e^{(1-\eps)(n-1)^{\gamma+1}/(\gamma+1)}
\le \vp(t_{n})\le e^{(1+\eps)(n-1)^{\gamma+1}/(\gamma+1)}$
for sufficiently big $n$ (depending on $\eps$).
Now $t_{n+1}-t_n\sim e^{n^{\gamma}}$, so that 
$t_{n}=t_1+\sum_{k=1}^{n-1}(t_{k+1}-t_k)
\sim\sum_{k=1}^{n-1} e^{k^{\gamma}}\ge e^{(n-1)^{\gamma}}$.
By assumption $\gamma>\frac{\D 1}{\D \beta-1}$, 
so that $\gamma\beta>\gamma+1$. Hence for sufficiently big $n$
\beqa
 \vp_{\beta}(t_{n-1})\ge
 e^{(\ln{{e^{(n-2)^{\gamma}}}})^{\beta}}
 =e^{(n-2)^{\gamma\beta}}
 >>e^{(1+\eps)(n-1)^{\gamma+1}/(\gamma+1)}
 \ge\vp(t_n).
\eeqa
This implies that on the whole interval $I_n$ the function
$\vp_{\beta}$ dominates $\vp$. Since this is true for
every interval $I_n$ ($n$ sufficiently big), we can 
deduce that $\mH_{\Phi_{\beta}}\subset \mH_{\Phi}$.

The remaining part of the proof is again built on the arguments of
of \cite[Theorem 1.3]{HK}.
Suppose now that there exists an unbounded multiplier $g$ for $\mH_{\Phi}$.
Let $\sigma_k=\{\zeta\in\T:\log|g(\zeta)|\in [k,k+1)\}$
which are of positive measure by assumption.
Since $\vp$ tends to infinity, there exists a subsequence
$(t_{n_k})_k$ such that $\vp(t_{n_k})\ge\frac{\D 1}{\D k^2 |\sigma_k|}$.
Then we can find $\sigma_k'\subset \sigma_k$ such that
$\vp(t_{n_k})|\sigma_k'|=\frac{\D 1}{\D k^2}$. Let $f$ be
the outer function the boundary values of which are in
modulus equal to $\sum_k e^{t_{n_k}}\chi_{\sigma_k'}$ on
$\bigcup_k\sigma'_k$ and $1$ elsewhere. Then
$\int_{\T}\Phi(|f|)dm=\sum_k\vp(t_{n_k})|\sigma'_k|
=\sum_k\frac{\D 1}{\D k^2}<\infty$.

On the other hand, since for $\gamma>0$ we have
$\vp(t_{n_k}+\gamma)\ge\vp(t_{n_k})+\vp(t_{n_k})\gamma\ge
\gamma\vp(t_{n_k})$, we obtain
\beqa
 \int_{\T}\Phi(|fg|)dm&=&\sum_k\int_{\sigma'_k}
 \vp(\log|f|+\log|g|)dm\ge \sum_k\int_{\sigma'_k}
 \log|g|\vp(t_{n_k})dm \\
 &\ge& \sum_k k\vp(t_{n_k})|\sigma'_k|
 =\sum_k\frac{1}{k}\\
 &=&\infty.
\eeqa
So, $g$ does not multiply $f$ to a function in $\mH_{\Phi}$. We
have reached a contradiction, and any multiplier in $\mH_{\Phi}$
has to be bounded.

It is easily checked that, by construction, $\vp$ satisfies 
the $\tilde{\Delta}_2$-condition, so that we also have
$\Hi\subset\mult(\mH_{\Phi})$.
\end{proof}

\section{Some more examples}\label{sectoptim}

\subsection{Optimality of the conditions}

We begin this section with an example discussing
the optimality of the results of Section \ref{S3}. 
We have already seen in Proposition 
\ref{propn3.4} that the result of Theorem \ref{thm1} is 
in a sense sharp: whenever a concave function $\gamma$
is not admissible for $\vp$ then we can find a function
in $\mH_{\Psi_{\gamma}}$, where $\Psi_{\gamma}=\vp\circ\gamma^{-1}
\circ\log$, that does not multiply on $\mH_{\Phi}$.

We will discuss this more thouroughly here
through the example $\vp_{1/2}(t)=e^{\sqrt{t}}$.
Recall that in this situation our
Theorem \ref{thm1} gave the inclusion $\bigcup_{p>0}H^p
\subset \mult \mH_{\Phi_{1/2}}$.
On the other side, Theorem \ref{thm3.2} shows that 
$\mult \mH_{\Phi_{1/2}}\subset
\mH_{\vp_{1/2,\delta}^{(\log)}\circ\log}$ for every $\delta>0$.
Recall that $\vp_{1/2,\delta}^{(\log)}(t)=e^{\delta
\frac{t}{\log t}}$.

Here we will use Proposition \ref{propn3.4}
to show the existence of a function $g$
not multiplying on $\mH_{\Phi_{1/2}}$ and which is in
Hardy-Orlicz classes coming much closer to $\bigcup_{p>0}H^p$ than
do the spaces $\mH_{\vp_{1/2,\delta}^{(\log)}\circ\log}$, $\delta>0$.
This shows that  Theorem \ref{thm3.2} is not optimal (even if 
Corollary \ref{cor3.5} gave us some optimality; see the
comments after that corollary).

We begin by introducing a new scale of Hardy-Orlicz spaces.
In order to simplify the
notation we will set for $k\ge 1$
\beqa
 \log_k:=\underbrace{\log\circ\cdots \circ\log}_{k\text{ times}}.
\eeqa
We will also set $e_1:=e$ and
$e_{k+1}:=e^{e_k}$.
Then for $k\ge 2$ 
we introduce the functions $\vp_{(k)}$ which are defined by
\beqa
 \vp_{(k)}(t)=\exp\Big(\frac{\D t}{\D \log_{k-1}(t)}\Big),
 \quad \text{for } t \ge e_{2k}.
\eeqa
The functions are completed suitably for $t<e_{2k}$ to convex 
functions.

The spaces $\mH_{\Phi_{(k)}}$, where $\Phi_{(k)}=
\vp_{(k)}\circ\log$, come extremely close to $\bigcup_{p>0}H^p$
when $k\to\infty$ without ever atteining the latter union.

\begin{proposition}\label{thm4.2}
There is a function $g\in\bigcap_{k\ge 1}\mH_{(k)}$ that
does not multiply on $\mH_{\Phi_{1/2}}$.
\end{proposition}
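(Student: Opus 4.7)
The plan is to apply the construction inside the proof of Proposition \ref{propn3.4} to $\vp = \vp_{1/2}$ with a carefully chosen non-admissible $\gamma$, and then to verify that the function $g$ produced by that construction lies in every $\mH_{\Phi_{(k)}}$. The key idea is to take
\[
\gamma(t) = \sqrt{t}\,h(t),
\]
where $h\colon\R^+\to\R^+$ is a concave, strictly increasing function tending to infinity strictly more slowly than \emph{every} iterated logarithm; a concrete choice is a smoothed version of the inverse-tower function, $h(t)=\log^*(t)$, which satisfies $h(t)\le \log_{k-1}(\sqrt{t})$ for all $t$ larger than a threshold depending on $k$. With this choice,
\[
\sqrt{t+\gamma(t)} - \sqrt{t} \;\sim\; \frac{\gamma(t)}{2\sqrt{t}} \;=\; \frac{h(t)}{2} \;\longrightarrow\; \infty,
\]
so $\vp_{1/2}(t+\gamma(t))/\vp_{1/2}(t)\to\infty$, i.e.\ $\gamma$ is not admissible for $\vp_{1/2}$, and the hypothesis of Proposition \ref{propn3.4} is satisfied.

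Next I would simply invoke that construction: pick a strictly increasing sequence $(t_n)$ with $\vp_{1/2}(t_n+\gamma(t_n))/\vp_{1/2}(t_n)\ge n$ and $\vp_{1/2}(t_n)\ge 2^n/n^2$, set $\eps_n=(n^2\vp_{1/2}(t_n))^{-1}$, choose pairwise disjoint measurable subsets $\sigma_n\subset\T$ with $|\sigma_n|=\eps_n$, and let $f,g$ be the outer functions whose boundary moduli equal $\sum_n e^{t_n}\chi_{\sigma_n}$ and $\sum_n e^{\gamma(t_n)}\chi_{\sigma_n}$ on $\bigcup_n\sigma_n$ and $1$ elsewhere. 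Exactly as in the proof of Proposition \ref{propn3.4}, one checks that $f\in\mH_{\Phi_{1/2}}$ while
\[
\int_\T \Phi_{1/2}(|fg|)\,dm \;=\; \sum_n \vp_{1/2}(t_n+\gamma(t_n))\,\eps_n \;\ge\; \sum_n \frac{1}{n} \;=\; \infty,
\]
so $g\notin \mult(\mH_{\Phi_{1/2}})$.

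The essential new content is then to verify that $g\in\mH_{\Phi_{(k)}}$ for every $k\ge 1$. Writing out the modular,
\[
\int_\T \Phi_{(k)}(|g|)\,dm \;=\; \sum_n \vp_{(k)}(\gamma(t_n))\,\eps_n \;=\; \sum_n \exp\!\Bigl(\tfrac{\gamma(t_n)}{\log_{k-1}(\gamma(t_n))} - \sqrt{t_n}\Bigr)\frac{1}{n^2}.
\]
Since $\log h(t)=o(\log t)$, one has $\log_{k-1}(\gamma(t))\sim \log_{k-1}(\sqrt{t})$ as $t\to\infty$, so
\[
\frac{\gamma(t)}{\log_{k-1}(\gamma(t))} \;\sim\; \frac{\sqrt{t}\,h(t)}{\log_{k-1}(\sqrt{t})} \;\le\; \sqrt{t}
\]
as soon as $h(t)\le \log_{k-1}(\sqrt{t})$, which holds for $t$ large (depending on $k$) by our choice of $h$. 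Hence the exponent in the series is eventually nonpositive, the series is dominated by a constant times $\sum 1/n^2<\infty$, and $g\in\mH_{\Phi_{(k)}}$.

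The main obstacle is striking the right balance in the growth of $h$: it must grow to infinity (to force $\gamma$ to be non-admissible for $\vp_{1/2}$, and to allow a sequence $(t_n)$ satisfying both the multiplier-breaking condition and the lower bound $\vp_{1/2}(t_n)\ge 2^n/n^2$), yet grow more slowly than \emph{every} iterated logarithm $\log_{k-1}(\sqrt{\cdot})$ (so that $g$ lies in every $\mH_{\Phi_{(k)}}$). The inverse-tower function $\log^*$ sits exactly in this sweet spot, and once such a choice is in hand the proof amounts to applying the construction of Proposition \ref{propn3.4} and carrying out the modular estimate above.
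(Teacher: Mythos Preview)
Your proposal is correct and follows essentially the same approach as the paper. The paper also takes $\gamma(t)=\sqrt{t}\,\eps(t)$ with $\eps$ the piecewise-affine inverse-tower function ($\eps(e_k)=k$), invokes Proposition~\ref{propn3.4}, and then verifies membership in every $\mH_{\Phi_{(k)}}$; the only cosmetic difference is that the paper argues the inclusion $\mH_{\Psi_\gamma}\subset\mH_{\Phi_{(k)}}$ by comparing defining functions (reducing to $\eps(u)\le\log_k(\sqrt{u}\,\eps(u))$), whereas you compute the modular of the explicit $g$ directly (reducing to $h(t)\le\log_{k-1}(\sqrt t)$)---an equivalent estimate.
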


\begin{proof}
Using the numbers $e_k$, we will define a function $\gamma$
which is not admissible for $\vp_{1/2}$. Let $\eps:\R^+\to\R$ be
continuous and piecewise affine such that
\beqa
 \eps(e_k)=k,\quad k\ge 1.
\eeqa
The function $\eps$ is clearly concave on $[1,+\infty)$, and so
will be $\gamma$ defined by $\gamma(t)=\sqrt{t}\eps(t)$ on
$[1,+\infty)$. The function $\gamma$ is not admissible since
\beqa
 \frac{\vp_{1/2}(t+\gamma(t))}{\vp_{1/2}(t)}
 =e^{\sqrt{t+\sqrt{t}\eps(t)}-\sqrt{t}}
 =e^{\frac{1}{2}\eps(t)+o(\eps(t))}
\eeqa
tends to infinity (we had already mentioned in 
Section \ref{S2} that any $\tilde{\Delta}$-admissible function
for $\vp_{\alpha}$ can grow at most as $t\to Ct^{1-\alpha}$). 
Hence by Proposition \ref{propn3.4}
there is a function in $\mH_{\Psi_{\gamma}}$ that does not 
multiply on $\mH_{\Phi}$. 

We will show that 
\beqa
 \mH_{\Psi_{\gamma}}\subset \mH_{\Phi_{(k)}},
\eeqa
for every $k$. 
For this it is sufficient to check that for every $k\in\N^*$
there is a $t_k$ such that for every $t\ge t_k$
\beqa
 \vp_{1/2}\circ\gamma^{-1}(t)\ge e^{\frac{t}{\log_k t}}.
\eeqa
Passing to logarithms and observing that $\gamma$ is continuous and
strictly increasing to $+\infty$ 
so that we can change to the variable 
$u=\gamma^{-1}(t)$, we are led to the verification of
\beqa
 \log\vp_{1/2}(u)={\sqrt{u}}\ge \frac{\gamma(u)}{\log_k(\gamma(u))}
 =\frac{\sqrt{u}\eps(u)}{\log_k(\sqrt{u}\eps(u))}
\eeqa
for $u$ sufficiently big. This is of course equivalent to
$\log_k(\sqrt{u}\eps(u))\ge \eps(u)$ for big $u$. The
left hand side of this estimate behaves like $\log_k u$ so
that it remains to show that $\eps$ is neglectible with respect
to $\log_k$ at infinity. Fix such a $k$ and let
$n>k$. Then for $t\in [e_n,e_{n+1})$ we have
$\log_k(t)\ge  \log_k(e_n)=e_{n-k}$ which goes ``extremely'' fast
to infinity (one could observe 
that for $k\ge 1$
we have $e_{k+1}/e_k=e^{e_k}/e_k\ge M:=e^{e-1}$ since $e^t\ge Mt$ for
$t\ge e$, so that $e_{n-k}$ grows at least exponentially in $n$), 
whereas $\eps(t)\le \eps(e_{n+1})=n+1$.
\end{proof}

\subsection{Big multipliers in small 
Hardy-Orlicz spaces}\label{sectexam}

In this section
we will show that there are Hardy-Orlicz spaces
beyond $\bigcup_{p>0}H^p$ coming very close to 
$\bigcup_{p>0}H^p$ and containing unbounded
multipliers. More precisely, such Hardy-Orlicz spaces
contain  Hardy-Orlicz spaces strictly bigger than $\Hi$.
This is of central interest in the interpolation problem
since it will allow to conclude that such Hardy-Orlicz
spaces 
admit interpolating sequences which are not Carleson, i.e.\ 
which are not interpolating for $\Hi$.

The key result to our examples here is the following proposition.

\begin{proposition}\label{prop5.1}
Let $\vp$ be a strongly convex function on $\R$ 
strictly increasing to $+\infty$. Let $(t_n)_n$ be the
sequence defined by 
\beqa
 \vp(t_n)=2^n,\quad n\in\N.
\eeqa
If $(t_{n+1}-t_n)_n$ tends to infinity, then
$\vp$ is $\tilde{\Delta}$-admissible, i.e.\
there exists $\gamma:[t_0,+\infty)\to\R$ concave, increasing
with $\lim_{t\to\infty}\gamma(t)=+\infty$ such that
\bea\label{tDprop5.1}
 \vp(t+\gamma(t))\le 4\vp(t),\quad t\ge t_0.
\eea
\end{proposition}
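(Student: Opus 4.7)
The key observation is that the condition $\varphi(t_n)=2^n$ turns the inequality $\varphi(t+\gamma(t))\le 4\varphi(t)$ into a purely combinatorial constraint on $\gamma$. Writing $\delta_n:=t_{n+1}-t_n$ (so $\delta_n\to\infty$ by hypothesis), if $t\in[t_n,t_{n+1}]$ and $\gamma$ is nondecreasing with $\gamma(t_{n+1})\le\delta_{n+1}$, then $t+\gamma(t)\le t_{n+1}+\delta_{n+1}=t_{n+2}$, so $\varphi(t+\gamma(t))\le\varphi(t_{n+2})=2^{n+2}=4\varphi(t_n)\le 4\varphi(t)$. Hence the proposition reduces to producing a concave, strictly increasing function $\gamma:[t_0,\infty)\to\R$ with $\gamma\to\infty$ and $\gamma(t_m)\le\delta_m$ for every sufficiently large $m$.

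To build $\gamma$ I would fix its values only on a thinned-out subsequence $(t_{n_k})_{k\ge 1}$ and interpolate linearly. Since $\delta_n\to\infty$ one can choose $n_1<n_2<\cdots$ satisfying both (a) $\delta_m\ge k+1$ for every $m\ge n_k$, and (b) $t_{n_{k+1}}-t_{n_k}$ is nondecreasing in $k$. Condition (a) is achievable because $\delta_n\to\infty$, and (b) is free since $t_{n_{k+1}}-t_{n_k}=\sum_{m=n_k}^{n_{k+1}-1}\delta_m$ can be enlarged at will by taking $n_{k+1}$ large enough. Let $\gamma$ be the continuous piecewise-linear function on $[t_{n_1},\infty)$ with $\gamma(t_{n_k})=k$ for all $k\ge 1$, and extend it on $[t_0,t_{n_1}]$ by any strictly increasing affine piece whose slope is at least $1/(t_{n_2}-t_{n_1})$, so as to preserve concavity at $t_{n_1}$. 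Since the slopes $1/(t_{n_{k+1}}-t_{n_k})$ on $[t_{n_k},t_{n_{k+1}}]$ are nonincreasing by (b), $\gamma$ is concave; it is strictly increasing and tends to $+\infty$.

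To verify the key bound, fix $m\ge n_1$ and let $k\ge 1$ satisfy $n_k\le m\le n_{k+1}$. Monotonicity of $\gamma$ gives $\gamma(t_m)\le\gamma(t_{n_{k+1}})=k+1$, while (a) applied at index $k$ (using $m\ge n_k$) gives $\delta_m\ge k+1$. Therefore $\gamma(t_m)\le\delta_m$ for every $m\ge n_1$, which is exactly the assumption required by the reduction of the first paragraph; combining the two yields $(\ref{tDprop5.1})$ for every $t\ge t_{n_1}$.

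The only delicate point is the simultaneous compatibility of (a) and (b): (a) forces $\gamma$ to wait at level $k$ long enough for $\delta_m$ to have climbed past $k+1$, while (b) forces the widening of the corresponding step so that the slopes stay nonincreasing. Both are freely achievable thanks to $\delta_n\to\infty$, which is the unique place where this hypothesis is used in the argument.
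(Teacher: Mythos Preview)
Your proof is correct and follows essentially the same approach as the paper's: both reduce the inequality to ensuring $t+\gamma(t)\le t_{n+2}$ whenever $t\in[t_n,t_{n+1})$, and both produce a concave increasing $\gamma$ tending to infinity with $\gamma(t_m)\le\delta_m$. The paper constructs the auxiliary piecewise-affine function $\gamma_1(t_n)=\delta_n$ and then simply asserts that a concave minorant tending to infinity exists, whereas you carry out this step explicitly via the thinned subsequence $(t_{n_k})$ with $\gamma(t_{n_k})=k$; your version is more detailed but otherwise the arguments coincide.
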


\begin{proof}
Since we are only interested in the estimate \eqref{tDprop5.1}
for big $t$, we can normalize the function $\vp$ such that
$\vp(0)=1$.

Split $\R$ into subintervals $[t_n,t_{n+1})$ (possibly
adding $(-\infty,t_0]$).

Let us construct a $\tilde{\Delta}$-admissible function.
To begin with let $\gamma_0$ be the continuous and
piecewise affine function defined on each interval $[t_n,t_{n+1})$ by
\beqa
 \gamma_0:[t_n,t_{n+1})&\lra&[t_{n+1},t_{n+2}),\\
 t&\lmto&t_{n+1}+\frac{t_{n+2}-t_{n+1}}{t_{n+1}-t_n}(t-t_n).
\eeqa
This is just the affine increasing bijection from $[t_n,t_{n+1})$
onto $[t_{n+1},t_{n+2})$.
Define moreover $\gamma_1(t)=\gamma_0(t)-t$ so that
$\gamma_1(t_n)=t_{n+1}-t_n$ for every $n$.
This function is still continuous and
piecewise affine. Moreover it tends to infinity since the
sequence $(t_{n+1}-t_n)_n$ does and since it 
is bounded below on any interval $[t_n,t_{n+1})$ by the values
$\gamma(t_n)$ and $\gamma(t_{n+1})$. It is clear that we can then
bound below $\gamma_1$ by a function $\gamma$ which is concave
(one could construct such a function as a continuous piecewise
affine function with decreasing growth coefficient on each interval).

Let us check that the so obtained function $\gamma$ 
satisfies the $\tilde{\Delta}$-admissibility type condition
\eqref{tDprop5.1}. Let $t\in\R$ and suppose $t\in
[t_n,t_{n+1})$. Observe that then $\gamma_0(t)\in [t_{n+1},t_{n+2})$.
Hence
\beqa
 \vp(t+\gamma(t))\le \vp(t+\gamma_1(t))=\vp(\gamma_0(t))
 \le \vp(t_{n+2})\le 2^2\vp(t_n)\le 2^2\vp(t).
\eeqa
\end{proof}

As a consequence of the previous proposition and
Theorem \ref{thm1} we obtain

\begin{corollary}\label{cor5.1}
Let $\vp$ be as in the proposition. 
There exists a strongly
convex function $\psi$ such that
\beqa
 \Alg(\mathcal{H}_{\Psi})\subset \mult(\mH_{\Phi}),
\eeqa
where $\Psi=\psi\circ\log$.
\end{corollary}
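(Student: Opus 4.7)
The corollary is a direct concatenation of Proposition \ref{prop5.1} with Theorem \ref{thm1}, so the plan is simply to match up hypotheses and conclusions of the two results, and then verify the strong convexity of the resulting $\psi$.

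First, I would apply Proposition \ref{prop5.1} to $\vp$. The hypotheses are exactly those assumed in the corollary, so the proposition yields a concave, strictly increasing function $\gamma:[t_0,\infty)\to\R$ with $\gamma(t)\to\infty$ and
\[
\vp(t+\gamma(t))\le 4\,\vp(t),\qquad t\ge t_0.
\]
This is precisely the $\tilde{\Delta}$-condition of Definition \ref{def2.1} with constant $c=4$, and $\gamma$ is a $\tilde{\Delta}$-admissible function for $\vp$. Feeding this $\gamma$ into Theorem \ref{thm1} then gives
\[
\Alg(\mH_{\Psi_{\gamma}})\subset\mult(\mH_{\Phi}),\qquad \Psi_{\gamma}=\vp\circ\gamma^{-1}\circ\log,
\]
so setting $\psi:=\vp\circ\gamma^{-1}$ (hence $\Psi=\psi\circ\log=\Psi_{\gamma}$) already gives the desired inclusion.

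The only item left to check is that $\psi$ is strongly convex. Convexity is immediate: $\gamma^{-1}$ is convex and increasing (as the inverse of a concave increasing function on $[t_0,\infty)$), and $\vp$ is convex and nondecreasing, so the composition $\psi=\vp\circ\gamma^{-1}$ is convex. For the growth condition $\psi(t)/t\to\infty$, set $s=\gamma^{-1}(t)$, so that $\psi(t)/t=\vp(s)/\gamma(s)$. Since $\gamma$ is concave on $[t_0,\infty)$, any supporting line gives $\gamma(s)=O(s)$; combined with $\vp(s)/s\to\infty$ from the strong convexity of $\vp$, this yields $\vp(s)/\gamma(s)\to\infty$, and hence $\psi(t)/t\to\infty$.

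No genuine obstacle arises here: the proof is essentially bookkeeping, since all the substantive work was already done in Proposition \ref{prop5.1} (producing the admissible $\gamma$) and in Theorem \ref{thm1} (translating admissibility into a lower bound on the multiplier algebra). The only mildly delicate point is the strong convexity check above, which is a one-line comparison between the at-most-linear growth of the concave $\gamma$ and the superlinear growth of the strongly convex $\vp$.
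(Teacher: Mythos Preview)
Your proposal is correct and follows exactly the route the paper intends: the paper does not give a formal proof of the corollary at all, merely stating that it is ``a consequence of the previous proposition and Theorem~\ref{thm1}''. You have filled in the bookkeeping, and in fact gone slightly further than the paper by explicitly verifying that $\psi=\vp\circ\gamma^{-1}$ is strongly convex (convexity of the composition, and the growth condition via $\gamma(s)=O(s)$), a point the paper leaves implicit.
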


Important examples of strongly convex functions for which the
sequence $(t_{n+1}-t_n)_n$ tends to infinity are given by
$\vp_k^{(\log)}(t)=e^{t/\log_k t}$, $k\in\N^*$, $\vp(t)=e^{t/\sqrt{\log_k(t)}}$,
and it is even possible to construct functions $\vp(t)$ that behave
on intervals $I_n$ like $\vp_n^{\log_n}$.

Let us discuss more thouroughly the case of $\vp_k^{(\log)}$.
This function defines a Hardy-Orlicz space that is very
close to $\bigcup_{p>0}H^p$ and having unbounded multipliers.
We will check that $\gamma_{k,c}(t)=c\log_k(t)$ is admissible:
\beqa
 \frac{t+c\log_k t}{\log_k(t+c\log_k t)}-\frac{t}{\log_k t}
 &=&\frac{t\log_k t+c\log^2_k t-t\log_k(t+c\log_k t)}
 {\log_k t \log_k(t+c\log_k t)}\\
 &=&\frac{c\log^2_k t -t (\log_k(t+c\log_k t)-\log_{k} t)}
 {\log_k t \log_k(t+c\log_k t)}\\
 &\le& c\frac{\log_k(t)}{\log_k(t+\log_k t)}\le c.
\eeqa
Also $\gamma_{k,c}^{-1}(t)=\exp_k(t/c)$ where $\exp_k
=\underbrace{\exp\circ\cdots\circ\exp}_{k \text{ times}}$.
So
\beqa
 \tilde{\Psi}_{k,\alpha}(t)
 &=&\vp\circ\gamma_{k,c}^{-1}\circ\log t
 =\vp(\exp_k\frac{\log t}{c})=\vp(\exp_{k-1}t^{\alpha})
 =\exp(\frac{\exp_{k-1}t^{\alpha}}{\log_k\exp_{k-1}t^{\alpha}})\\
 &=&\exp(\frac{\exp_{k-1}t^{\alpha}}{\alpha\log t}).
\eeqa
Setting also
\beqa
 \Psi_{k,\alpha}(t)=\exp_k t^{\alpha},
\eeqa
we again get
\beqa
 \bigcup_{\alpha>0}\mH_{\Psi_{k,\alpha}}
 =\bigcup_{\alpha>0}\mH_{\tilde{\Psi}_{k,\alpha}}
 \subset \mult(\mH_{\Phi_{k}}).
\eeqa

The spaces $\mH_{\Psi_{\alpha}}$ (and a fortiori the spaces
$\mH_{\Psi_{k,\alpha}}$) are extremely small, by which we mean
that they are very close to $\Hi$. This can be expressed
by the Boyd indices. 
For Orlicz spaces, \cite[Proposition 2.b.5]{LT} gives an
explicit formula allowing the computation of these indices.
It turns out that --- not very surprisingly --- 
$p_X=q_X=+\infty$ for $X=\mH_{\Psi_{k,\alpha}}$.

\section{Interpolation}\label{S4}

In this section we will consider the interpolation problem 
in 
Hardy-Orlicz spaces beyond $\bigcup_{p>0}H^p$. 

We shall begin by recalling some definitions. 
The interpolation problem we would like to consider is that
of free interpolation. 

\begin{definition}\label{def4.1}
A sequence $\Lambda=\{\lambda_n\}_n\subset \DD$ is called
a free interpolating sequence for a space of holomorphic functions
on $\DD$, $X=\Hol(\DD)$, if for every $f\in X$, and for every
sequence $(b_n)_n$ with 
\beqa
 |b_n|\le |f(\lambda_n)|,\quad n\in\N,
\eeqa
there exists a function $g\in X$ such that $g(\lambda_n)=b_n$,
$n\in\N$.

Notation: $\Lambda\in\Int_{l^{\infty}}X$.
\end{definition}

Another way of expressing that a sequence is of free interpolation
is to say that $l^{\infty}$ is contained in the
multiplier algebra of $X|\Lambda:=\{(f(\lambda_n))_n:
f\in X\}$: for every $(a_n)_n
=(f(\lambda_n))_n\in X|\Lambda$ and for every $\mu=(\mu_n)_n
\in l^{\infty}$ there is $g\in X$ such that $g(\lambda_n)=\mu_n a_n$,
$n\in\N$, i.e.\ $(\mu_na_n)_n\in X|\Lambda$.

It is clear that if we can interpolate the bounded sequences
by functions in the multiplier algebra, i.e.\ $l^{\infty}\subset
\mult(X)|\Lambda$, then $\Lambda\in  \Int_{l^{\infty}}X$.

The definition of free interpolation 
originates in the work by Vinogradov and Havin
in the 70s. It is very well adapted to the Hilbert space situation
where it can be connected to the unconditionality of a sequence
of reproducing kernels, see e.g.~\cite[Theorem C3.1.4, Theorem
C3.2.5]{Nik02} for
a general source; see also \cite{HMNT} or \cite{H06} for more motivations
for the non-Banach situation. 

Let us recall that by a famous result of L. Carleson \cite{carl}
the interpolating sequences for $\Hi$, i.e.\ the sequences
$\Lambda$ for which $\Hi|\Lambda=l^{\infty}$, are
characterized by the Carleson condition:
\bea\label{carlcond}
 \inf_{\lambda\in\Lambda}|B_{\Lambda\setminus\lambda}(\lambda)|
 =\delta>0.
\eea
Here $B_E=\prod_{\lambda\in E}b_{\lambda}$ 
is the Blaschke product associated with a discrete 
set $E\subset \DD$ (supposed to satisfy the Blaschke condition
$\sum_{\lambda\in E}(1-|\lambda|^2)<\infty$).
Recall that for $\lambda\in\DD$
\beqa
 b_{\lambda}(z)=\frac{|\lambda|}{\lambda}\frac{\lambda-z}
 {1-\overline{\lambda}z},\quad z\in\DD.
\eeqa
A sequence satisfying \eqref{carlcond} will be called a
{\it Carleson sequence}.
It is clear that for $X=\Hi$ classical interpolation and
free interpolation are the same.

The Carleson condition still characterizes interpolating
sequences (free or classical) in a large class of Hardy-Orlicz 
spaces included in the scale of $H^p$ spaces 
(see \cite{ShHSh} for $H^p$, $p\ge 1$; \cite{Kab} for $H^p$, $p<1$
and \cite{Har} for more general Hardy-Orlicz spaces included in
the scale of classical Hardy spaces $H^p$).

The situation is intrinsically different in spaces close to
the Nevanlinna and Smirnov classes. Here interpolating sequences
are characterized by the existence of harmonic majorants of 
the function $\vp_{\Lambda}$ defined by $\vp_{\Lambda}(\lambda)=
\log\frac{1}{|B_{\lambda}(\lambda)|}$ when $\lambda\in\Lambda$
and $\vp_{\Lambda}=0$ otherwise. See \cite{HMNT} for precise
results in the Nevanlinna and Smirnov classes and
\cite{H06} for big Hardy-Orlicz spaces where $\mult(\mH_{\Phi})
=\mH_{\Phi}$.

Of course a big gap remains between big Hardy-Orlicz spaces
considered in \cite{H06}
and $\bigcup_{p>0} H^p$. In particular 
an intriguing question is
to know whether there are Hardy-Orlicz spaces beyond $\bigcup_{p>0}H^p$
where the Carleson condition still characterizes the interpolating
sequences. In the light of Theorem \ref{thm4.3}, this question
is still more exciting since there are very large Hardy-Orlicz spaces
for which the multipliers reduce to $\Hi$. 
Here we will give examples of Hardy-Orlicz spaces which 
are close to the union $\bigcup_{p>0}H^p$ and which have
free interpolating sequences which are not Carleson. 

We will consider
the problem through the multiplier algebra of
the Hardy-Orlicz space under consideration. 
As already explained, the idea is 
to solve the interpolation problem: find $\Lambda=\{\lambda_n\}_n
\subset\DD$ such that 
\beqa
 l^{\infty}\subset \mult(\mH_{\Phi})|\Lambda.
\eeqa
Then $\Lambda$ is a free interpolating sequence for $\mH_{\Phi}$, and
in our context we would like that $\Lambda$ is
not a Carleson sequence.

The situation we will consider here is that of a Hardy-Orlicz
space $\mH_{\Phi}$ 
the multiplier algebra of which contains $\mH_{\Psi}$ where
$\Psi=\psi\circ\log$ and
$\psi:\R\to [0,\infty)$
is a strongly convex function.
Examples of such a situation can be deduced from Corollary 
\ref{cor5.1}.
In such a situation $\mH_{\Psi}$ contains not only $\Hi$ but also --- and
this will be important for us --- unbounded functions such as for example
the outer function $g$ with $|g|=\Psi^{-1}\circ v_1$ 
a.e.\ on $\T$, where $v_1(t)=\frac{\D 1}{\D t\log^{1+\eps}(1/t)}$
and $\eps>0$.

Let $M:=\alg(\mH_{\Psi})$ which is inluded in $\mult (\mH_{\Phi})$. 

We need two simple properties on $M$. Recall from \eqref{outer}
that for a function
$f$ in the Smirnov class we have written $[f]$ for its outer part.
We will use more generally this notation for the 
outer function associated with a measurable 
function $f$ on $\T$ with $\log |f|\in L^1$.

\begin{lemma}\label{lem6.2}
If $f\in M$ then there exists $n\in\N^*$ such that 
$[f]^{1/n}\in \mH_{\Psi}$
\end{lemma}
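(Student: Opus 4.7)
The plan is to expand any $f \in M = \alg(\mH_{\Psi})$ as a finite $\C$-linear combination $f = \sum_{k=1}^{K} c_k p_k$ of products $p_k = \prod_{j=1}^{J_k} f_{k,j}$ with $f_{k,j} \in \mH_{\Psi}$, and to choose the root exponent $n$ large enough that the convexity and monotonicity of $\psi$ allow one to reduce the integrability of $\psi\bigl(\tfrac{1}{n}\log|f|\bigr)$ to that of $\psi(\log|f_{k,j}|)$, which we have for free. Since $f \in M \subset \mult(\mH_{\Phi}) \subset N^+$ and $f\not\equiv 0$, the outer part $[f]$ is well defined, $\log|f|\in L^1(\T)$, and $[f]^{1/n}$ is itself outer with boundary modulus $|f|^{1/n}$ a.e.\ on $\T$; so it suffices to find $n$ with $\int_{\T}\psi\bigl(\tfrac{1}{n}\log|f|\bigr)\,dm<\infty$, where $\Psi = \psi\circ\log$.

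The key pointwise estimate on $\T$, obtained from $|f| \leq C \max_k|p_k|$ with $C = K \max_k|c_k|$ together with $\log|p_k| = \sum_j \log|f_{k,j}| \leq J_k \max_j \log^+|f_{k,j}|$, is
\[
\log|f| \;\leq\; \log C + J \max_{k,j}\log^+|f_{k,j}|,
\]
where $J = \max_k J_k$. I would then choose $n = 2J$, divide by $n$, and apply Jensen's inequality to the increasing convex function $\psi$ to get
\[
\psi\bigl(\tfrac{1}{n}\log|f|\bigr) \;\leq\; \tfrac{1}{2}\psi\bigl(\tfrac{\log C}{J}\bigr) + \tfrac{1}{2}\psi\bigl(\max_{k,j}\log^+|f_{k,j}|\bigr).
\]
Finally, monotonicity of $\psi$ yields $\psi(\max_{k,j}\log^+|f_{k,j}|) \leq \sum_{k,j}[\psi(0) + \psi(\log|f_{k,j}|)]$ (using $\psi\geq 0$), and each summand integrates to something finite because $f_{k,j} \in \mH_{\Psi}$; integrating over $\T$ concludes the proof.

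The only delicate point is the calibration of $n$. The factor $J$ collapses each product $p_k$ into a single $\log^+|f_{k,j}|$ via the $\max$-inequality, but an extra factor $2$ is needed so that after Jensen the argument of $\psi$ on the right-hand side stays at $\log|f_{k,j}|$ rather than being inflated to $2\log|f_{k,j}|$. This precision is essential because $\mH_{\Psi}$ is not in general closed under $h \mapsto h^2$ when $\psi$ fails the $\tilde{\Delta}_2$-condition, so doubling the argument of $\psi$ could destroy integrability. No other step is genuinely subtle; the whole argument rests on convexity, monotonicity and positivity of $\psi$, together with the elementary $\max$-bounds used to convert products into maxima.
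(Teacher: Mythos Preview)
Your argument is correct and rests on the same core device as the paper's proof: bounding a product by a power of the pointwise maximum of its factors, so that taking a suitable root brings you back into $\mH_{\Psi}$ via the observation that $\int_{\T}\Psi(\max_{k,j}|f_{k,j}|)\,dm \le \sum_{k,j}\int_{\T}\Psi(|f_{k,j}|)\,dm$. The paper organizes this inductively---first for pairs in $\mH_{\Psi}$ (with $w=\max(|f_1|,|f_2|)$ and $|f_1f_2|\le w^2$), then bootstrapping to sums and products in $M$---whereas you unpack a generic element of $\alg(\mH_{\Psi})$ in one shot and make the convexity of $\psi$ explicit through Jensen to absorb the constant $C$. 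Your direct route is a little cleaner and gives an explicit value $n=2J$; the paper's inductive version makes it transparent that the property is stable under sums and products separately, which is what one really needs for the algebra. Substantively the two proofs are the same.
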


\begin{proof}
We begin by checking the result for products and sums
of functions in the generator $\mH_{\Psi}$ of $M$.
Observe first that if $f_1,f_2\in\mH_{\Psi}$, then
$[w]\in\mH_{\Psi}$ where $w:=\max(|f_1|,|f_2|)$ (just split the
integral $\int_{\T}\Psi(|w|)dm$ into two parts where $|f_1|$ 
(respectively $|f_2|$) has bigger modulus).
So, if $f=f_1f_2$ then $|f|\le w^2$ and $[f]^{1/2}\in\mH_{\Psi}$.
By a simple induction this holds for finite products.

Of course, $[f_1+f_2]\in\mH_{\Psi}$ whenever $f_1,f_2
\in\mH_{\Psi}$, and this extends obviously to
finite sum of functions in $\mH_{\Psi}$. 

Let us now look whether the property holds for products and
sums of functions in $M$.
If $f_1,f_2\in M$ 
with $[f_1]^{1/n}\in\mH_{\Psi}$, $[f_2]^{1/k}\in\mH_{\Psi}$
then
$[w]^{1/N}\in \mH_{\Psi}$, where $w:=\max(|f_1|,|f_2|)$ 
and $N=\max(n,k)$
(just split the
integral $\int_{\T}\Psi(|w|^{1/N})dm$ into two parts where $|f_1|$ 
(respectively $|f_2|$) has bigger modulus; the case when
$|f_1|\le 1$ or $|f_2|\le 1$ is of no relevance here).  
Hence, if
$f=f_1f_2$ 
then
$|f|\le |[w]|^{2}$, 
and $\int_{\T}\Psi(|f|^{1/(2N)})dm\le
\int_{\T}\Psi(|w|^{1/N})dm<\infty$, i.e.\
$[f]^{1/(2N)}\in\mH_{\Psi}$. By a simple induction
this also holds for finite products.

For sums of functions in $M$,
let $f_1,f_2,w,N$ as above. In particular $[w]^{1/N}\in\mH_{\Psi}$.
If now $f=f_1+f_2$, then $|f|\le 2w$
so that $|[f]^{1/N}|\le |2w|^{1/N}$ from where we deduce
that $[f]^{1/N}\in \mH_{\Psi}$.
By a simple induction this generalizes to finite sums.

Since the property of the lemma is true for functions
in $\mH_{\Psi}$ and it is conserved by finite sums and
products of functions in $M=\alg(\mH_{\Psi})$ 
it holds for the algebra generated by $\mH_{\Psi}$.
\end{proof}

A simple consequence is the following.

\begin{corollary}\label{cor6.3}
If $f\in M$, then $[\max(1,|f|)]\in M$.
\end{corollary}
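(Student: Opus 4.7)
The plan is to reduce to $\mH_{\Psi}$ via Lemma \ref{lem6.2} and verify directly that $\mH_{\Psi}$ is stable under the operation $g \mapsto [\max(1,|g|)]$.

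First, given $f \in M$, I would apply Lemma \ref{lem6.2} to produce an integer $n \in \N^*$ such that $g := [f]^{1/n} \in \mH_{\Psi}$. Since $|g| = |f|^{1/n}$ a.e.\ on $\T$ and the outer-function construction commutes with positive powers ($[v^{\alpha}] = [v]^{\alpha}$, as one sees from the explicit formula \eqref{outer} by factoring $1/n$ out of the integral), I would record the identity
\beqa
 [\max(1,|f|)]^{1/n} \;=\; [\max(1,|f|)^{1/n}] \;=\; [\max(1,|f|^{1/n})] \;=\; [\max(1,|g|)],
\eeqa
using the pointwise equality $\max(1,t)^{1/n} = \max(1,t^{1/n})$ for $t\ge 0$. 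So it suffices to show that $[\max(1,|g|)] \in M$.

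The key step is to check that this outer function actually lies in $\mH_{\Psi}$ itself. Splitting $\T = \{|g|\le 1\}\cup\{|g|>1\}$,
\beqa
 \int_{\T}\Psi(\max(1,|g|))\,dm
 \;\le\; \Psi(1) + \int_{\{|g|>1\}}\Psi(|g|)\,dm
 \;\le\; \Psi(1) + \int_{\T}\Psi(|g|)\,dm \;<\;\infty,
\eeqa
since $g\in\mH_{\Psi}$. The outer function with boundary modulus $\max(1,|g|)$ exists because $\log_+|g|\in L^1(\T)$ (as $g\in N^+$), and the estimate above shows that $[\max(1,|g|)]\in\mH_{\Psi}$.

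Finally, since $M = \alg(\mH_{\Psi})$ is an algebra containing $\mH_{\Psi}$, raising to the $n$-th power stays in $M$, so
\beqa
 [\max(1,|f|)] \;=\; [\max(1,|g|)]^{n} \;\in\; M,
\eeqa
which is the claim. The only delicate point is the outer-function manipulation in the first paragraph, but this is immediate from the exponential Poisson-integral formula for $[\,\cdot\,]$; all other estimates are elementary split-of-integral arguments using nothing beyond monotonicity of $\Psi$.
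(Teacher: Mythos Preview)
Your proof is correct and follows essentially the same route as the paper's: apply Lemma~\ref{lem6.2} to get $[f]^{1/n}\in\mH_{\Psi}$, use the identity $[\max(1,|f|)]^{1/n}=[\max(1,|f|^{1/n})]$, observe that this lies in $\mH_{\Psi}$, and take the $n$-th power to land back in $M$. You have simply made explicit the integral-splitting estimate and the outer-function manipulation that the paper leaves as ``clearly''.
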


\begin{proof}
From the lemma we obtain that $[f]^{1/n}\in\mH_{\Psi}$ for
a convenient $n\in\N^*$. Then clearly 
$h:=[\max(1,|f|)]^{1/n}=[\max(1,|f|^{1/n})]\in\mH_{\Psi}$.
Hence $[\max(1,|f|)]=h^n\in M=\Alg \mH_{\Psi}$.
\end{proof}

We can add another consequence of Lemma \ref{lem6.2}.

\begin{corollary}\label{cor6.4}
We have $\Alg(\mH_{\Psi})= \bigcup_{n\in\N} \mH_{\Psi_n}$ where
$\Psi_n(t)=\Psi(t^{1/n})$.
\end{corollary}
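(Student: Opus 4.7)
The plan is to prove the two inclusions separately after recording a simple reformulation of membership in $\mH_{\Psi_n}$. Since $\Psi = \psi\circ\log$ with $\psi$ strongly convex, we have $\Psi_n(t) = \Psi(t^{1/n}) = \psi(n^{-1}\log t)$, so $\mH_{\Psi_n}$ is defined by the strongly convex function $t\mapsto \psi(t/n)$ and is therefore a legitimate Hardy-Orlicz space. The key identity I would use is that for $f\in N^+$, writing $[f]$ for the outer part of $f$ and $[f]^{1/n}$ for the outer function with modulus $|f|^{1/n}$ on $\T$ (well-defined since $\log|f|\in L^1$ and $(1/n)\log|f|\in L^1$), one has
\begin{equation*}
\int_{\T}\Psi_n(|f|)\,dm = \int_{\T}\psi\bigl(n^{-1}\log|f|\bigr)\,dm = \int_{\T}\Psi\bigl(|[f]^{1/n}|\bigr)\,dm,
\end{equation*}
so $f\in\mH_{\Psi_n}$ if and only if $[f]^{1/n}\in\mH_{\Psi}$.

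For the inclusion $\Alg(\mH_{\Psi})\subset\bigcup_{n}\mH_{\Psi_n}$, I would simply invoke Lemma \ref{lem6.2}: any $f\in M=\Alg(\mH_{\Psi})$ satisfies $[f]^{1/n}\in\mH_{\Psi}$ for some $n\in\N^*$, and by the identity above this is exactly $f\in\mH_{\Psi_n}$. Note that $f$ is automatically in $N^+$ because $\Alg(\mH_{\Psi})\subset N^+$ (the Smirnov class is closed under sums and products).

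For the reverse inclusion $\mH_{\Psi_n}\subset\Alg(\mH_{\Psi})$, take $f\in\mH_{\Psi_n}$ and factor $f = I\cdot[f]$ with $I$ inner. By the reformulation, $g:=[f]^{1/n}\in\mH_{\Psi}$, and since $g^n$ is outer with modulus $|f|$ a.e. on $\T$ we have $[f]=g^n$, so $[f]\in\alg(\mH_{\Psi})$. The inner factor $I$ is bounded, hence in $\Hi\subset\mH_{\Psi}$ (which holds because $\Psi(|I|)$ is bounded a.e. on $\T$ using that $\psi$ is bounded on $(-\infty,0]$). Therefore $f=I\cdot[f]\in\alg(\mH_{\Psi})=\Alg(\mH_{\Psi})$. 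The only conceptual point worth double-checking is the compatibility of the outer-function calculus with the factorization $[f]=([f]^{1/n})^n$, which is immediate from uniqueness of outer functions with prescribed modulus; everything else is a direct manipulation of the modular once Lemma \ref{lem6.2} is in hand.
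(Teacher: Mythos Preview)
Your proof is correct and follows exactly the route the paper intends: the corollary is stated without proof as ``another consequence of Lemma~\ref{lem6.2}'', and your argument is the natural filling-in of that claim, namely the reformulation $f\in\mH_{\Psi_n}\iff [f]^{1/n}\in\mH_{\Psi}$ combined with Lemma~\ref{lem6.2} for one inclusion and the inner--outer factorization $f=I\cdot([f]^{1/n})^n$ for the other. There is nothing to add.
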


It should be noted that $\Psi_n$ is not necessarily 
convex, but $\psi_n(t):=\Psi_n\circ\exp(t)=\psi(t/n)$
is still stongly convex in the terminology of \cite{Ru} so
that we still 
can define the corresponding Hardy-Orlicz classes
(which are not necessarily vector spaces). 
In the case $\Psi_{1,\alpha}(t)=e^{t^{\alpha}}$,
which defines a Hardy-Orlicz space
contained in the multiplier algebra of $\mH_{\Phi_{1}^{(\log)}}$ 
($\Phi_1^{(\log)}(t)=e^{\log t/\log\log t}$ for $t$ sufficiently big),
$(\Psi_{1,\alpha})_n$ will be convex (we have taken the notation
from the end of Subsection \ref{sectexam}).

Like in \cite{DSh} 
our example of a free interpolating sequence will be
constructed as a non separated union of two Carleson sequences
(this is different to \cite{TW} where Carleson's method
is used to interpolate $l^q$-sequences by $H^p$-functions).
In order to do that we will use the results of \cite{Ha}
based on the so-called (C)-stability.

Let us recall the definition of (C)-stability (see \cite{Ha}).

\begin{definition}
Let $X\subset\Hol(\DD)$. If there exists $\delta_0\in (0,1)$
such that for every pair of Carleson sequences
$\Lambda=\{\lambda_n\}_n\subset\DD$ and 
$\tilde{\Lambda}=\{\tilde{\lambda}_n\}_n\subset\DD$ with
\beqa
 \sup_n |b_{\lambda_n}(\tilde{\lambda}_n)|=\delta<\delta_0
\eeqa
we have
\beqa
 X|\Lambda=X|\tilde{\Lambda}
\eeqa
then $X$ is called (C)-stable.
\end{definition}

Since $\Hi\subset \mH_{\Psi}$ and $M$ is an algebra containing
$\mH_{\Psi}$ we also have
$\Hi \subset M=\mult(M)$ 
which in particular implies that a Carleson sequence
is a free interpolating sequence for 
$M$.

\begin{proposition}
The space $M$ is $(C)$-stable.
\end{proposition}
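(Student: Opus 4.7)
The plan is to reduce the $(C)$-stability of $M$ to the $(C)$-stability of the generating space $\mH_{\Psi}$, using the fact that $M=\alg(\mH_{\Psi})$ consists of finite polynomials in elements of $\mH_{\Psi}$.

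First, I would establish (or invoke from \cite{Ha}) the $(C)$-stability of the Hardy-Orlicz space $\mH_{\Psi}$ itself. Since $\Psi=\psi\circ\log$ with $\psi$ strongly convex, $\mH_{\Psi}$ sits between $\bigcup_{p>0}H^p$ and the big Hardy-Orlicz spaces, and the general framework of \cite{Ha} yields a threshold $\delta_0\in(0,1)$ such that for any pair of Carleson sequences $\Lambda,\tilde\Lambda$ with $\sup_k|b_{\lambda_k}(\tilde\lambda_k)|<\delta_0$ one has $\mH_{\Psi}|\Lambda=\mH_{\Psi}|\tilde\Lambda$. In other words, for every $g\in\mH_{\Psi}$ there exists $\tilde g\in\mH_{\Psi}$ with $\tilde g(\tilde\lambda_k)=g(\lambda_k)$ for every $k$, and conversely.

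Given this, the passage from $\mH_{\Psi}$ to $M$ is purely algebraic. An arbitrary $f\in M$ can be written as a finite polynomial expression $f=\sum_j c_j\,g_{j,1}\cdots g_{j,m_j}$ with $g_{j,i}\in\mH_{\Psi}$ (by definition of $\alg(\mH_{\Psi})$). Applying the $(C)$-stability of $\mH_{\Psi}$ factor by factor, I obtain $\tilde g_{j,i}\in\mH_{\Psi}$ with $\tilde g_{j,i}(\tilde\lambda_k)=g_{j,i}(\lambda_k)$ for every $k$, and setting $\tilde f:=\sum_j c_j\,\tilde g_{j,1}\cdots \tilde g_{j,m_j}\in M$ one verifies directly
\[
 \tilde f(\tilde\lambda_k)=\sum_j c_j\,\tilde g_{j,1}(\tilde\lambda_k)\cdots \tilde g_{j,m_j}(\tilde\lambda_k) =\sum_j c_j\,g_{j,1}(\lambda_k)\cdots g_{j,m_j}(\lambda_k)=f(\lambda_k).
\]
Hence $M|\Lambda\subset M|\tilde\Lambda$, and by symmetry equality holds, with the same threshold $\delta_0$.

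The main obstacle is the first step, namely establishing $(C)$-stability of $\mH_{\Psi}$ itself. The strategy in \cite{Ha} combines a Harnack-type control of $\mH_{\Psi}$-functions on pseudohyperbolically close pairs of points with a Carleson-type interpolation tailored to the modular $\int_{\T}\Psi(|f|)\,dm$; one then checks that the defining function $\Psi$ enjoys the required regularity (strong convexity together with sufficient growth control), which does hold in our setting since $\Psi=\psi\circ\log$ with $\psi$ strongly convex arising from the $\tilde{\Delta}$-admissibility of Corollary \ref{cor5.1}. Once this is granted, the algebraic reduction described above is immediate, and the $\delta_0$ inherited from $\mH_{\Psi}$ serves also for $M$.
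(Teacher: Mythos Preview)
Your reduction to the $(C)$-stability of $\mH_{\Psi}$ is precisely the gap. You do not prove it; you only gesture at \cite{Ha}, but that reference takes $(C)$-stability as a \emph{hypothesis} for its trace theorems---it does not establish $(C)$-stability for general Hardy-Orlicz spaces. In fact the paper says explicitly (in the remark following Proposition~\ref{prop6.6}) that it is not clear whether the underlying Hardy-Orlicz space is $(C)$-stable, and that ``this explains why we pass through $M$ which we know to be $(C)$-stable.'' So your first step is exactly the step the author does not know how to carry out, and the whole point of working with $M$ instead of $\mH_{\Psi}$ is to bypass it. The spaces $\mH_{\Psi}$ arising here (cf.\ the end of Subsection~\ref{sectexam}) are extremely small, with infinite Boyd indices and typically without $\Delta_2$; nothing in \cite{Ha} or \cite{Har} covers them.

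The paper's proof is direct and avoids this issue entirely. Given $f\in M$, set $F:=[\max(1,|f|)]$; Corollary~\ref{cor6.3} guarantees $F\in M$. Then $\log|F|$ is a \emph{positive} harmonic function, so Harnack's inequality yields a constant $c>1$ (depending only on the pseudohyperbolic closeness $\delta_0$) with $|F(\lambda_n)|\le |F(\tilde\lambda_n)|^c$. Taking an integer $N\ge c$ one gets $|f(\lambda_n)|\le |F^N(\tilde\lambda_n)|$ with $F^N\in M$. Since $\Hi\subset M$, a Carleson sequence is free interpolating for $M$, so there exists $g\in M$ with $g(\tilde\lambda_n)=f(\lambda_n)$. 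The algebra structure enters through Corollary~\ref{cor6.3} and the stability of $M$ under powers, but no $(C)$-stability of the generator $\mH_{\Psi}$ is required.
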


\begin{proof}
Pick $f\in M$, 
and let $\Lambda$, $\tilde{\Lambda}$ as in the definition.
Set $a_n=f(\lambda_n)$.
We have to verify that $\{a_n\}_n\in M|\tilde{\Lambda}$.
Put $w=\max(1,|f|)$ a.e.\ $\T$.
By Corollary \ref{cor6.3}, $F:=[w]\in M$.
It is clear that $|a_n|\le |A_n|$ where $A_n=F(\lambda_n)$.
Note that $\log |F|$ is by construction a positive harmonic 
function, and so by Harnack's inequality 
there is a constant $c>1$ such that 
\beqa
 |F(\tilde{\lambda}_n)|^{1/c}\le |F(\lambda_n)|\le
 |F(\tilde{\lambda}_n)|^c,\quad n\in\N.
\eeqa
So 
\beqa
 |a_n|\le |F^c(\tilde{\lambda}_n)| 
\eeqa
Let $n$ be a natural number bigger than $c$. Then $|F^c|\le |F^n|$
and $F^n\in M$ by Lemma \ref{lem6.2}.

Since $\tilde{\Lambda}$ is a Carleson sequence by assumption,
and so a free interpolating sequence for $M$, there exists
a function $g\in M$, such that
\beqa
 g(\tilde{\lambda}_n)=a_n,\quad n\in \N.
\eeqa
Hence $M|\Lambda\subset M|\tilde{\Lambda}$. Since the problem
is symmetric, we also have the reverse inclusion, and $M$
is (C)-stable.
\end{proof}

We will now examine the trace of $\mH_{\Psi}^*$. For our purpose
it will be sufficient to know the restriction 
$\mH_{\Psi}^*|\Lambda$ when $\Lambda$ is a Carleson sequence.
For this we will use the Jones-Vinogradov interpolation operator
(see e.g.\ \cite[Vol.2, pp.179-180]{Nik02}),
which with a sequence $a=\{a_n\}_n$ associates a holomorphic
function
\beqa
 Ta(z)=\sum_{n\in\N}a_n f_n(z),\quad z\in \DD.
\eeqa
The exact form of the functions $f_n$ is not very
interesting for our discussion here (we refer the reader to the
above cited monograph, or to \cite{Ha}). The family $(f_n)$ is
of course a Beurling-type family, by which we mean that
$f_n(\lambda_k)=\delta_{nk}$ and
\beqa
 \sup_{z\in\DD}\sum_{n\in\N}|f(z)|<\infty.
\eeqa
The operator $T$ is continuous from $l^1(1-|\lambda|^2)=
\{a=(a_n)_n:\sum_{n\in\N}
\|a\|_{l^1(1-|\lambda|^2)}:=(1-|\lambda_n|^2)|a_n|<\infty\}$ 
to $H^1$ and from $l^{\infty}$ to $H^{\infty}$
(see the above cited monograph). 
These results suggest the use of interpolation 
between Banach space (lattices). 
In order to do this we will adapt a Calder\'on
interpolation theorem for rearrangement invariant subspaces
(see e.g.\ \cite[Theorem 2.a.10]{LT}) to our situation.

The space
\beqa
 l_{\Psi}^*(1-|\lambda_n|^2):=\{a=(a_n)_n:\exists C>0,
 \sum_{n\in\N}(1-|\lambda_n|^2)
 \Psi\left(\frac{|a_n|}{C}\right) <\infty\},
\eeqa
equipped with the usual norm $\|\cdot\|_{\Phi}$
is a Banach space. 

\begin{proposition}\label{prop6.6}
Let $\Lambda\in (C)$. The operator $T$ is continuous from
$l^*_{\Psi}(1-|\lambda_n|^2)$ to $\mH_{\Psi}^*$.
\end{proposition}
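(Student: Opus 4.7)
The plan is to interpolate between the two known endpoint bounds for $T$, namely $T\colon l^1(1-|\lambda_n|^2)\to H^1$ and $T\colon l^\infty\to H^\infty$, via a Jensen-type convexity argument with the Orlicz function $\Psi$; this realizes a Calder\'on-style interpolation for the Luxemburg space $L^*_\Psi$ without invoking the full abstract machinery.

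First I would extract two pointwise ingredients from the endpoints. Applying the $l^\infty\to H^\infty$ bound to the sequence of phases $a_n=\overline{f_n(z)}/|f_n(z)|$ at a fixed $z\in\DD$ (which has $l^\infty$-norm $\le 1$) gives the Beurling-type bound
\[
C:=\sup_{z\in\DD}\sum_n|f_n(z)|<\infty.
\]
Applying the $l^1(1-|\lambda_n|^2)\to H^1$ bound to the delta sequence concentrated at index $k$ yields the individual estimate $\|f_k\|_{H^1}\le C_1(1-|\lambda_k|^2)$ for every $k$.

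Next, given $a=(a_n)\in l^*_\Psi(1-|\lambda_n|^2)$ and $\alpha>0$ with $\sum_n(1-|\lambda_n|^2)\Psi(|a_n|/\alpha)<\infty$, observe that for each $z\in\DD$ the weights $w_n(z):=|f_n(z)|/C$ form a \emph{sub}-probability measure on $\N$. Using convexity of $\Psi$ together with $\Psi(0)=0$ (which gives the scaling $\Psi(sx)\le s\Psi(x)$ for $s\in[0,1]$) and then Jensen's inequality for the renormalized probabilities $\{w_n/s\}$ with $s=\sum_n w_n$, one obtains the pointwise estimate
\[
\Psi\!\left(\frac{|Ta(z)|}{\alpha C}\right)\le \sum_n\frac{|f_n(z)|}{C}\,\Psi\!\left(\frac{|a_n|}{\alpha}\right),\qquad z\in\DD.
\]
Integrating this on circles $\{|z|=r\}$, applying the subharmonic bound $\int_\T|f_n(r\zeta)|\,dm(\zeta)\le\|f_n\|_{H^1}\le C_1(1-|\lambda_n|^2)$, and letting $r\to 1^-$ by Fatou yields
\[
\int_\T\Psi\!\left(\frac{|Ta|}{\alpha C}\right)dm\le\frac{C_1}{C}\sum_n(1-|\lambda_n|^2)\Psi\!\left(\frac{|a_n|}{\alpha}\right)<\infty,
\]
which immediately translates into the desired Luxemburg-norm bound $\|Ta\|_\Psi\lesssim \|a\|_{l^*_\Psi(1-|\lambda_n|^2)}$.

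Finally, it remains to see that $Ta\in N^+$ so that $Ta\in\mathcal{H}^*_\Psi$ and not merely $L^*_\Psi(\T)$. The Young-type inequality $|a_n|\le\alpha(\Psi(|a_n|/\alpha)+\widetilde\Psi(1))$ for the complementary Orlicz function $\widetilde\Psi$, combined with $\sum_n(1-|\lambda_n|^2)<\infty$ (Blaschke), gives $\sum_n|a_n|(1-|\lambda_n|^2)<\infty$; hence by the $l^1\to H^1$ endpoint applied directly we have $Ta\in H^1\subset N^+$, settling the Smirnov-class membership and simultaneously ensuring that the series $\sum_n a_nf_n(z)$ defines a holomorphic function of $z$. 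The main obstacle is the Jensen step for sub-probability weights, where the normalization $\Psi(0)=0$ together with convexity is essential, and the careful bookkeeping of constants needed to pass from the interior pointwise inequality to the integrated boundary estimate and then to a Luxemburg-norm statement.
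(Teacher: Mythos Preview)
Your argument is correct and takes a genuinely different route from the paper's proof. The paper proceeds via the Calder\'on decomposition: for each level $s$ it splits $a=b^s+c^s$ into a truncated part (handled by the $l^\infty\to H^\infty$ bound) and a remainder (handled by the $l^1(1-|\lambda_n|^2)\to H^1$ bound), obtains the Hardy--Littlewood--P\'olya inequality $\int_0^s (Ta)^*\le c\int_0^s a^*$, and then invokes the abstract fact that $L^*_\Psi$ is rearrangement invariant (\cite[Proposition 2.a.8]{LT}) to conclude. Your approach instead exploits the two endpoint bounds \emph{pointwise}: the $l^\infty\to H^\infty$ bound is distilled into the Beurling condition $\sum_n|f_n(z)|\le C$, which turns the convexity of $\Psi$ into a Jensen inequality at each interior point; the $l^1\to H^1$ bound is distilled into $\|f_n\|_{H^1}\le C_1(1-|\lambda_n|^2)$, which converts the Jensen bound into the Orlicz modular estimate after integration and Fatou. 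This is more elementary and self-contained (no appeal to rearrangement-invariant theory), and it yields explicit constants; on the other hand, the paper's Calder\'on argument is insensitive to the particular Orlicz structure and would apply verbatim to any rearrangement-invariant intermediate space between $L^1$ and $L^\infty$. Your final paragraph, deducing $a\in l^1(1-|\lambda_n|^2)$ via Young's inequality so that $Ta\in H^1\subset N^+$, cleanly handles both the Smirnov-class membership and the passage to boundary values needed for the Fatou step---a point the paper leaves implicit.
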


Consequently, if $\Lambda\in (C)$ then
\beqa
 l_{\Psi}^*(1-|\lambda_n|^2)\subset \mH_{\Psi}^*|\Lambda.
\eeqa

\begin{proof}
We have already introduced the distribution function and the
decreasing rearrangement of a function defined on a measure space.
We now have to consider these notions in the sequence
space $l_{\Psi}^*(1-|\lambda_n|^2)$ (the underlying measure space
being $\N$ with the measure $\mu=\sum_{n\in\N}(1-|\lambda_n|^2)\delta_n$)
and in the Lebesgue space $L_{\Psi}^*$.

We start with a sequence 
$a\in l_{\Psi}^*(1-|\lambda_n|^2)$.
Repeating the arguments of the proof of Calder\'on's theorem
given in \cite[Theorem 2.a.10]{LT}, we set for our sequence
$a$ and an $s\in [0,L]$, $L:=\sum_{n\in\N}(1-|\lambda_n|^2)<\infty$,
\beqa
 b_n^s=\left\{
 \begin{array}{ll}
 (|a_n|-a^*(s))\frac{a_n}{|a_n|} & \text{ if }|a_n| > a^*(s)\\
 0 & \text{ if }|a_n|\le a^*(s)
 \end{array}
 \right.
\eeqa
and $c_n^s=a_n-b_n^s$. Clearly, $\|c^s\|_{l^{\infty}}\le a^*(s)$.

\begin{center}
\begin{picture}(0,0)%
\includegraphics{decrrearr.pstex}%
\end{picture}%
\setlength{\unitlength}{1973sp}%
\begingroup\makeatletter\ifx\SetFigFont\undefined%
\gdef\SetFigFont#1#2#3#4#5{%
  \reset@font\fontsize{#1}{#2pt}%
  \fontfamily{#3}\fontseries{#4}\fontshape{#5}%
  \selectfont}%
\fi\endgroup%
\begin{picture}(11937,7875)(601,-8224)
\put(5926,-7636){\makebox(0,0)[lb]{\smash{{\SetFigFont{8}{9.6}{\familydefault}{\mddefault}{\updefault}{\color[rgb]{0,0,0}$s$}%
}}}}
\put(11626,-7561){\makebox(0,0)[lb]{\smash{{\SetFigFont{8}{9.6}{\rmdefault}{\mddefault}{\updefault}{\color[rgb]{0,0,0}$L$}%
}}}}
\put(601,-4636){\makebox(0,0)[lb]{\smash{{\SetFigFont{8}{9.6}{\familydefault}{\mddefault}{\updefault}{\color[rgb]{0,0,0}$a^*(s)=a_{n_s}$}%
}}}}
\put(5326,-8161){\makebox(0,0)[lb]{\smash{{\SetFigFont{8}{9.6}{\rmdefault}{\mddefault}{\updefault}{\color[rgb]{0,0,0}$1-|\lambda_{n_s}|^2$}%
}}}}
\put(4951,-7636){\makebox(0,0)[lb]{\smash{{\SetFigFont{8}{9.6}{\familydefault}{\mddefault}{\updefault}{\color[rgb]{0,0,0}$s_{n_s}$}%
}}}}
\end{picture}%
\\
Figure 2: Decreasing rearrangement of $(a_n)_n$
\end{center}

Also, the norm of $b^s$ in $l^1(1-|\lambda_n|^2)$ corresponding
to the hatched region in Figure 1 can be computed as follows
\beqa
 \|b^s\|_{l^1(1-|\lambda_n|^2)}
 &=&\sum_{n\in\N}(1-|\lambda_n|^2)|b_n^s|
 =\sum_{n:|a_n|> a^*(s)}(1-|\lambda_n|^2)(|a_n|-a^*(s))\\
 &=&\sum_{n:|a_n|> a^*(s)}(1-|\lambda_n|^2)|a_n|
 -a^*(s)\sum_{n:|a_n|> a^*(s)}(1-|\lambda_n|^2)\\
 &=&\int_0^sa^*(t)dt-sa^*(s)
\eeqa
(here $n_s$ is an integer with $a^*(s)=a_{n_s}$
and $s_{n_s}=\sum_{n:|a_n|> a^*(s)}(1-|\lambda_n|^2)$).

Now, $T$ is linear, 
and by a well known estimate on decreasing rearrangements
$(Ta)^*(s+s)\le (Tb^s)^*(s)+(Tc^s)^*(s)$.
Hence, we obtain as in the proof of Calder\'on's theorem
\beqa
 \int_0^s(Ta)^*(t)dt
 &=& 2\int_0^{s/2}(Ta)^*(2s)ds
 \le 2\int_0^{s/2}(Tb^s)^*(s)ds+2\int_0^{s/2}(Tc^s)^*(s)ds\\
 &\le& 2\int_0^{s/2}(Tb^s)^*(s)ds+2\int_0^{s/2}\|Tc^s\|_{\infty}ds\\
 &\le& 2\|Tb^s\|_1+s\|Tc^s\|_{\infty}\\
 &\le&2\max(\|T\|_{l^1(1-|\lambda_n|^2)\to
   H^1},\|T\|_{l^{\infty}\to\Hi}) (\|b^s\|_{l^1(1-|\lambda_n|^2)}
 +s a^*(s))\\
 &=&2\max(\|T\|_{l^1(1-|\lambda_n|^2)\to
   H^1},\|T\|_{l^{\infty}\to\Hi})\int_0^sa^*(t)dt
\eeqa
The function $g$ 
defined by
\beqa
 g(e^{2\pi it})=a^*(Lt), \quad t\in (0,1],
\eeqa
is in $L^*_{\Psi}$ (recall that $L$ was the Blaschke sum
$\sum(1-|\lambda_n|^2)$ corresponding to the measure
$\mu(\N)$), and the above inequality becomes
\bea\label{riestim}
  \int_0^s(Ta)^*(t)dt\le c\int_0^s g^*(e^{2\pi it})dt,\quad \forall s\in (0,1]
\eea
(here $c$ is a suitable constant).

Now, $L_{\Psi}^*(\T)$ is a rearrangement invariant space
(see \cite[p.120]{LT}) and so, by \cite[Proposition 2.a.8]{LT},
we deduce from \eqref{riestim} that $Ta\in L_{\Psi_d}^*(\T)$
and that $\|Ta\|_{L_{\Psi}^*}\le c_1 \|g\|_{L_{\Psi}^*}
\le {c_2} \|a\|_{l_{\Psi}^*}$. 
This achieves the proof
\end{proof}

We should mention that we do not know whether $\mH_{\Phi}^*|\Lambda$
embeds into $l^{*}_{\Psi}(1-|\lambda_n|^2)$, and for this reason it is
not clear if $\mH_{\Phi}^*$ is $(C)$-stable. This explains why we
pass through $M$ which we know to be $(C)$-stable.

Let us now turn to the construction of an interpolating
sequence for $\mH_{\Phi}$ not satisfying the Carleson
condition. As already mentioned, for that it is sufficient 
to construct a sequence $\Lambda$ which is not Carleson
yet $M|\Lambda$ contains $l^{\infty}$. We will use
Theorem 1.4 of \cite{Ha} (the idea of course goes back
to \cite{DSh}.

\begin{proposition}
There exists a sequence $\Lambda\not\in (C)$ such that $M|\Lambda\supset
l^{\infty}$.
\end{proposition}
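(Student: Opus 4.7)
The plan is to apply Theorem 1.4 of \cite{Ha}, whose hypotheses are essentially in place. We have just proved that $M$ is $(C)$-stable, and Proposition \ref{prop6.6} shows that for any Carleson sequence $\Lambda_1$ the trace $M|\Lambda_1$ contains the weighted Orlicz sequence space $l_\Psi^*(1-|\lambda_n|^2)$. Crucially, because $\psi$ is strongly convex, this trace space is \emph{strictly} larger than $l^\infty$ --- for a lacunary $\Lambda_1$ with exponentially decaying weights one can accommodate sequences tending to infinity (the restriction to $\Lambda_1$ of the unbounded outer multiplier with modulus $\Psi^{-1}\circ v_1$ from Section \ref{sectexam} is a witness).

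Explicitly, I would fix a lacunary Carleson sequence $\Lambda_1=\{\lambda_n\}_n$ with $1-|\lambda_n|^2\sim 2^{-n}$ and pick a sequence $\rho_n\downarrow 0$ slowly enough that $(1/\rho_n)_n\in l_\Psi^*(1-|\lambda_n|^2)$. This is feasible because the modular sum $\sum_n 2^{-n}\Psi(1/(C\rho_n))$ is of geometric type and $\Psi$ can absorb any prescribed slow growth. Then select $\tilde\lambda_n$ with $|b_{\lambda_n}(\tilde\lambda_n)|=\rho_n$, close enough to $\lambda_n$ that $\Lambda_2:=\{\tilde\lambda_n\}_n$ remains Carleson. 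The union $\Lambda:=\Lambda_1\cup\Lambda_2$ then violates the Carleson condition since $|B_{\Lambda\setminus\tilde\lambda_n}(\tilde\lambda_n)|\le\rho_n\to 0$.

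To verify $l^\infty\subset M|\Lambda$, I use a Douglas--Shapiro divided-difference scheme. Given bounded targets $(a_n)$ at $(\lambda_n)$ and $(b_n)$ at $(\tilde\lambda_n)$, Carleson's theorem yields $h\in\Hi\subset M$ with $h(\lambda_n)=a_n$ and $\|h\|_\infty$ controlled by the target; in particular $(h(\tilde\lambda_n))_n$ is bounded. Writing $B:=B_{\Lambda_1}$, the Carleson property of $\Lambda_1$ together with the closeness of $\tilde\lambda_n$ to $\lambda_n$ yields $|B(\tilde\lambda_n)|\geq c_0\rho_n$ for some $c_0>0$. Setting $c_n:=(b_n-h(\tilde\lambda_n))/B(\tilde\lambda_n)$ gives $|c_n|\leq C/\rho_n$, whence $(c_n)_n\in l_\Psi^*(1-|\tilde\lambda_n|^2)$ by our choice of $\rho_n$ (since $1-|\tilde\lambda_n|^2\sim 1-|\lambda_n|^2$). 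Proposition \ref{prop6.6} applied to $\Lambda_2$ then furnishes $G\in\mH_\Psi^*$ with $G(\tilde\lambda_n)=c_n$, and $G\in M$ since $\mH_\Psi^*\subset M$ (any element of $\mH_\Psi^*$ is a constant multiple of an element of $\mH_\Psi$, and $M$ is an algebra containing $\mH_\Psi$ and the constants). The function $f:=h+BG$ then lies in $M$ and satisfies $f(\lambda_n)=a_n$, $f(\tilde\lambda_n)=b_n$.

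The main delicate point is the simultaneous choice of $\rho_n$: it must tend to zero (so that $\Lambda$ fails the Carleson condition) yet slowly enough that $(1/\rho_n)$ sits inside $l_\Psi^*(1-|\lambda_n|^2)$ (so that the residual interpolation on $\Lambda_2$ is solvable inside $M$). This balance is possible precisely because $M$ admits unbounded multipliers (Corollary \ref{cor5.1}), which makes the trace space of $M$ on Carleson sequences strictly richer than $l^\infty$.
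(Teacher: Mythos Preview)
Your argument is correct. The construction and the divided-difference scheme $f=h+BG$ go through exactly as you describe: the estimate $|B(\tilde\lambda_n)|\ge c_0\rho_n$ follows from the Carleson constant of $\Lambda_1$, the residual sequence $(c_n)$ lands in $l_\Psi^*(1-|\tilde\lambda_n|^2)$ by your choice of $\rho_n$, Proposition~\ref{prop6.6} produces $G\in\mH_\Psi^*\subset M$, and $f\in M$ since $M$ is an algebra containing $\Hi$.

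The route, however, differs from the paper's. The paper does not carry out the $h+BG$ decomposition by hand; instead it invokes the $(C)$-stability of $M$ (proved just before) together with Theorem~1.4 of \cite{Ha}, which gives the inclusion $M_2(\Lambda)\subset M|\Lambda$ for the discrete Sobolev-type space
\[
 M_2(\Lambda)=\Big\{(a_{n,i}):(a_{n,1})\in M|\Lambda_1,\ \Big(\tfrac{a_{n,1}-a_{n,2}}{b_{\lambda_{n,2}}(\lambda_{n,1})}\Big)\in M|\Lambda_1\Big\}.
\]
Then Proposition~\ref{prop6.6} is used only to embed $l_\Psi^*(1-|\lambda_{n,1}|^2)$ into $M|\Lambda_1$, and the choice of the gaps $|b_{\lambda_{n,1}}(\lambda_{n,2})|\ge 2/(\Psi^{-1}(\gamma_n)-1)$ (with $\sum(1-|\lambda_{n,1}|^2)\gamma_n<\infty$, $\gamma_n\to\infty$) forces $l^\infty\subset l^*_{\Psi,2}\subset M_2(\Lambda)$. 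Your direct construction is more elementary and self-contained --- in particular it does not actually use the $(C)$-stability result you cite in your opening paragraph --- whereas the paper's argument places the example inside the general framework of \cite{Ha}. Both reach the same conclusion by the same underlying Douglas--Shapiro idea; the difference is whether the divided-difference interpolation step is done explicitly or absorbed into a cited theorem.
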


\begin{proof}
Let $\Lambda_j=\{\lambda_{n,j}\}\subset \DD$ 
be Carleson sequences, $j=1,2$, such that 
$|b_{\lambda_{n,1}}(\lambda_{n,2})|\le \delta_0$ and such
that
\beqa
 \lim_{n\to\infty}|b_{\lambda_{n,1}}(\lambda_{n,2})|=0.
\eeqa
The latter condition guarantees that $\Lambda=\Lambda_1
\cup\Lambda_2$ is not Carleson. 
The condition on the speed of convergence to zero
of $(|b_{\lambda_{n,1}}(\lambda_{n,2})|)_n$ will be fixed later.

Let $M(\Lambda_1)=M|\Lambda_1$ ($=M|\Lambda_2$), and
set 
\beqa
 M_2(\Lambda):=\{(a_{n,i})_{n\in\N,i=1,2}:
 (a_{n,1})_n\in 
 M(\Lambda_1), \left(\frac{a_{n,1}-a_{n,2}}
 {b_{\lambda_{n,2}}(\lambda_{n,1})}\right)_n\in 
 M(\Lambda_1)\},
\eeqa
which is a kind of inductive limit
of first order discrete Sobolev-Orlicz spaces.
Since $M$ is (C)-stable, we deduce from
\cite[Theorem 1.4]{Ha} that
\beqa
 M_2(\Lambda)\subset M|\Lambda:=\{(f(\lambda_{n,i}))_{n\in\N,i=1,2}:
 f\in M\}
\eeqa
(the careful reader might have observed that we only use one
half of that theorem, but this is sufficient for our
purpose here since we are only interested in 
one inclusion). 
Set also
\beqa
 l_{\Psi,2}^*(1-|\lambda_{n,i}|^2):=\{(a_{n,i})_{n\in\N,i=1,2}:
 (a_{n,1})_n\in l_{\Psi}^*(1-|\lambda_{n,1}|^2), \left(\frac{a_{n,1}-a_{n,2}}
 {b_{\lambda_{n,2}}(\lambda_{n,1})}\right)_n\in l_{\Psi}^*
 (1-|\lambda_{n,1}|^2)\},
\eeqa
and analogously $l_{\Psi,2}(1-|\lambda_{n,i}|^2)$ by omitting
the stars everywhere in the previous definition.
By Proposition \ref{prop6.6}, 
$l_{\Psi}^*(1-|\lambda_{n,1}|^2)\subset \mH_{\Psi}^*|\Lambda
\subset M(\Lambda_1)$, and
so $l^*_{\Psi,2}(1-|\lambda_{n,i}|^2)\subset M_2(\Lambda)$.
In particular we can interpolate every sequence
$(a_{n,i})_{n\in\N,i=1,2}$ with
\beqa
 \sum_{n\in\N} (1-|\lambda_{n,1}|^2)\Psi\left(|a_{n,1}|+
 \left|\frac{a_{n,2}-a_{n,1}}{b_{\lambda_{n,1}}(\lambda_{n,2})}
 \right|\right) <\infty
\eeqa
by a function in $M$. 
Now, since $\Lambda_1$ is a Blaschke sequence, there exists
an increasing sequence $(\gamma_n)_n$ of positive elements
tending to infinity and such that
\beqa
 \sum_{n\in\N}(1-|\lambda_{n,1}|^2)\gamma_n<\infty.
\eeqa
Choosing $\Lambda_2$ such that
$(|b_{\lambda_{n,1}}(\lambda_{n,2})|)_n$ goes to zero and
\beqa
 |b_{\lambda_{n,1}}(\lambda_{n,2})|\ge
 \frac{2}{\Psi^{-1}(\gamma_n)-1},
\eeqa
we obtain for every $a\in l^{\infty}$ with $\|a\|_{\infty}\le 1$,
\beqa
 \sum_{n\in\N} (1-|\lambda_{n,1}|^2)\Psi\left(|a_{n,1}|+
 \left|\frac{a_{n,2}-a_{n,1}}{b_{\lambda_{n,1}}(\lambda_{n,2})}
 \right|\right) 
 &\le& \sum_{n\in\N} (1-|\lambda_{n,1}|^2)
 \Psi\left(1+\frac{2}{|b_{\lambda_{n,1}}(\lambda_{n,2})|}\right)\\
 &\le& \sum_{n\in\N} (1-|\lambda_{n,1}|^2)\gamma_n<\infty.
\eeqa
Hence, the unit ball of $l^{\infty}$ is in 
$M|\Lambda$,
and so also the whole space $l^{\infty}$ since $M$ 
is a vector space.
We are done.
\end{proof}

As a consequence we obtain the following result.

\begin{corollary}\label{cor6.9}
Let $\vp$ be a strongly convex function, and $\Psi$ a strictly
increasing, convex, unbounded function such that
\beqa
 \mH_{\Psi}\subset\mult (\mH_{\vp\circ\log}).
\eeqa
Then there exists $\Lambda\not\in (C)$ such that $\Lambda\in
\Int_{l^{\infty}}\mH_{\vp\circ\log}$.
\end{corollary}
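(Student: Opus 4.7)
The plan is to deduce the corollary directly from the preceding proposition together with the hypothesis. The proposition already produced a non-Carleson sequence $\Lambda$ such that $l^{\infty}\subset M|\Lambda$, where $M=\alg(\mH_{\Psi})$ was assumed to be contained in $\mult(\mH_{\Phi})$. So the work here is just to verify that the hypothesis of the corollary places us exactly in that setting, and then to translate the containment for $M$ into a free interpolation statement for $\mH_{\vp\circ\log}$.

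First, I would set $M:=\alg(\mH_{\Psi})$. Since $\mult(\mH_{\vp\circ\log})$ is by definition an algebra, and it contains $\mH_{\Psi}$ by hypothesis, it automatically contains the algebra generated by $\mH_{\Psi}$, so $M\subset\mult(\mH_{\vp\circ\log})$. This is precisely the structural assumption used in the preceding proposition (and in Lemma \ref{lem6.2}, Corollary \ref{cor6.3}, Corollary \ref{cor6.4}, Proposition \ref{prop6.6}, and the $(C)$-stability argument). Note also that $\Hi\subset\mH_{\Psi}\subset M$, which guarantees that Carleson sequences are free interpolating for $M$, a fact that enters the construction.

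Second, applying the preceding proposition to this $M$, I obtain a sequence $\Lambda=\Lambda_1\cup\Lambda_2\notin (C)$ built from two Carleson sequences whose separation $|b_{\lambda_{n,1}}(\lambda_{n,2})|$ tends to zero at the prescribed rate, and such that $l^{\infty}\subset M|\Lambda$. Since $M\subset\mult(\mH_{\vp\circ\log})$, this immediately gives $l^{\infty}\subset\mult(\mH_{\vp\circ\log})|\Lambda$. By the observation made right after Definition \ref{def4.1} (if bounded sequences can be interpolated by multipliers of $X$, then $\Lambda\in\Int_{l^{\infty}}X$), we conclude that $\Lambda\in\Int_{l^{\infty}}\mH_{\vp\circ\log}$, which is the claim.

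The only delicate point I foresee is a bookkeeping issue about the assumptions on $\Psi$: the preceding proposition was phrased for $\Psi=\psi\circ\log$ with $\psi$ strongly convex, whereas the corollary only requires $\Psi$ strictly increasing, convex, and unbounded. If needed, I would handle this by the standard reduction of replacing $\Psi$ by a strongly convex $\tilde{\Psi}\le\Psi$ with $\mH_{\tilde{\Psi}}\supset\mH_{\Psi}$ still unbounded and still contained in $\mult(\mH_{\vp\circ\log})$ (the orderings of Hardy-Orlicz spaces reviewed in Section \ref{S2} allow this smoothing without losing the multiplier containment). Once this cosmetic adjustment is made, the argument is a direct application of the preceding proposition.
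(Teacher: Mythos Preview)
Your proposal is correct and matches the paper's approach exactly: the paper offers no separate proof of the corollary, treating it as an immediate consequence of the preceding proposition together with the inclusion $M=\alg(\mH_{\Psi})\subset\mult(\mH_{\vp\circ\log})$ and the remark after Definition~\ref{def4.1}.

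One small correction to your closing bookkeeping remark: the inequality direction in your proposed fix is reversed. Taking $\tilde{\Psi}\le\Psi$ gives $\mH_{\tilde{\Psi}}\supset\mH_{\Psi}$, which could push you \emph{outside} the multiplier algebra, not keep you inside it. If any adjustment were needed you would want $\tilde{\Psi}\ge\Psi$, so that $\mH_{\tilde{\Psi}}\subset\mH_{\Psi}\subset\mult(\mH_{\vp\circ\log})$, while checking that $\mH_{\tilde{\Psi}}$ still contains unbounded functions. In fact the issue is moot: the properties of $\Psi$ actually used in the chain of results (Lemma~\ref{lem6.2} through the proposition) are precisely convexity of $\Psi$ (for rearrangement invariance in Proposition~\ref{prop6.6}) and that $\mH_{\Psi}$ contains unbounded functions, both of which are guaranteed by the hypotheses of the corollary as stated.
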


\providecommand{\bysame}{\leavevmode\hbox to3em{\hrulefill}\thinspace}
\providecommand{\MR}{\relax\ifhmode\unskip\space\fi MR }

\end{document}